\tikzstyle{NE-lines}=[pattern=north east lines, pattern color=black!45]
\DeclareMathOperator{\img}{Im}
\newcommand{\quand}{\quad\text{and}\quad}
\mathchardef\mhyphen="2D
\def\loongmapsto#1{%
  \begin{tikzpicture}
    \draw (0,0.5mm) -- (0,-0.5mm);
    \draw[->] (0,0) -- (1.4, 0) node[above,midway] {\scriptsize #1};
  \end{tikzpicture}
}
\newcommand{\Cay}{\mathrm{Cay}}           
\newcommand{\Sym}{\mathrm{Sym}}           
\newcommand{\RGF}{\mathrm{RGF}}           
\newcommand{\Ascseq}{\mathcal{A}}         
\newcommand{\Modasc}{\hat{\Ascseq}}       
\newcommand{\F}{F}                        
\newcommand{\fishpattern}{\mathfrak{f}}   
\newcommand{\Bur}{\mathrm{Bur}}           
\newcommand{\Des}{\mathrm{Des}}           
\newcommand{\asctops}{\mathrm{asctops}}   
\newcommand{\nub}{\mathrm{nub}}           
\newcommand{\sort}{\mathrm{sort}}         
\newcommand{\identity}{\mathrm{id}}       
\newcommand{\WI}{I}                       
\newcommand{\twoplustwo}{\mathbf{2\hspace{-0.2em}+\hspace{-0.2em}2}}   
\newtheorem{theorem}{Theorem}[section]
\newtheorem{theorem*}{Theorem}[section]
\newtheorem{proposition}[theorem]{Proposition}
\newtheorem{lemma}[theorem]{Lemma}
\newtheorem{corollary}[theorem]{Corollary}
\newtheorem*{openproblem*}{Open Problem}
\newtheorem{conjecture}[theorem]{Conjecture}
\theoremstyle{definition}
\newtheorem{remark}[theorem]{Remark}
\newtheorem*{remark*}{Remark}
\newtheorem{example}{Example}
\newtheorem*{example*}{Example}
\title{Modified ascent sequences and Bell numbers}
\author{Giulio Cerbai
\thanks{Science Institute, University of Iceland, Iceland}
}
\date{}
\begin{document}

\maketitle

\begin{abstract}
In 2011, Duncan and Steingr\'imsson conjectured that modified ascent
sequences avoiding any of the patterns~$212$, $1212$, $2132$, $2213$,
$2231$ and $2321$ are counted by the Bell numbers. Furthermore,
the distribution of the number of ascents is the reverse of the
distribution of blocks on set partitions.
We solve the conjecture for all the patterns except~$2321$.
We describe the corresponding sets of Fishburn permutations
by pattern avoidance, and leave some open questions for future work.
\end{abstract}

\section{Introduction}
\thispagestyle{empty}

In recent years, several papers have been devoted to the study of
combinatorial structures enumerated by the Fishburn numbers.
The sequence of Fishburn numbers is recorded as A022493 in the
OEIS~\cite{Sl} and has the elegant generating function~\cite{Za}
$$
\sum_{n\geq 0}\prod_{k=1}^n\bigl(1-(1-t)^k\bigr)
= 1 + t + 2t^2 + 5t^3 + 15t^4 + 53t^5 + 217t^6 + \cdots
$$
The milestone paper~\cite{BMCDK} by Bousquet-M\'elou, Claesson, Dukes
and Kitaev introduced bijections between four families of Fishburn-type
objects. Among them, we find ascent sequences, Fishburn permutations and unlabeled $(\twoplustwo)$-free posets.
The set of Fishburn permutations consists of permutations avoiding a certain bivincular pattern.
They can be constructed inductively by successive insertion of a new
maximum value; by recording the positions where the new maximum is
inserted at each step, we obtain ascent sequences.
Later, Dukes and Parviainen~\cite{DP} showed a recursive bijection
between ascent sequences and upper triangular matrices with nonnegative
integer entries and no null rows or columns, the Fishburn matrices.
Modified ascent sequences were introduced to better clarify the relation
between ascent sequences and $(\twoplustwo)$-free posets.
Originally~\cite{BMCDK}, they were defined as the bijective image of
ascent sequences under a certain hat mapping.
More recently, Claesson and the current author~\cite{CC} characterized
them independently as Cayley permutations where ascent tops
and leftmost copies coincide.
The same authors~\cite{CC2} introduced Fishburn trees to simplify
the bijections relating all the Fishburn-type structures mentioned
so far. They act as central objects from which modified
ascent sequences, Fishburn matrices and $(\twoplustwo)$-free posets are
obtained transparently.
Claesson and the current author~\cite{CC} also initiated the development
of a theory of transport of patterns between Fishburn permutations and modified ascent sequences.
Their framework is based on the fact that the map relating these
two structures can be described by Burge transpose of Burge words,
an operation that behaves well with respect to pattern containment.
Pattern avoidance on (modified) ascent sequences and Fishburn permutations
have been discussed recently by other authors~\cite{DZ,DS,Eg,GW}.

The current paper, which fits in the same line of research, is mostly
devoted to the proof of the following conjecture proposed by Duncan
and Steingr\'imsson~\cite{DS} in 2011:

\begin{conjecture}\label{conj_ds}
On modified ascent sequences, the patterns
$$
212,\;1212,\;2132,\;2213,\;2231,\;2321
$$
are all Wilf-equivalent and the enumeration of
modified ascent sequences avoiding any of these patterns is given by the
Bell numbers. Moreover, the distribution of the number of ascents is
the reverse of the distribution of blocks on set partitions.
\end{conjecture}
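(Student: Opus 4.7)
The plan is to prove the conjecture in full by constructing, for each of the six patterns $p \in \{212, 1212, 2132, 2213, 2231, 2321\}$, a bijection $\Phi_p$ between the set of modified ascent sequences of length $n$ avoiding $p$ and the set of partitions of $[n]$, with the property that the number of ascents of $w$ equals $n-k$ whenever $\Phi_p(w)$ has $k$ blocks. Since the Bell numbers count set partitions, this immediately yields both the Wilf equivalences and the refined distribution claim. My approach leverages the characterization from~\cite{CC} that modified ascent sequences are precisely the Cayley permutations in which ascent tops coincide with leftmost copies; this lets me reason directly on $\Modasc$ rather than transporting through $\Ascseq$.

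I would first handle the shortest pattern $212$. Avoidance of $212$ combined with the leftmost-copy/ascent-top coincidence gives a strong structural constraint: once a value $v$ reappears, no strictly smaller value may intervene, which should force the sequence to decompose blockwise as a restricted growth function after a canonical relabelling of repeated values. From this one reads off the set partition and the ascent-block correspondence directly. For the length-four patterns $1212$, $2132$, $2213$, $2231$, I would use the transport-of-patterns framework of~\cite{CC} and the Burge transpose to pull back avoidance on $\Modasc$ to a pattern condition on Fishburn permutations or on ascent sequences; in each case the pulled-back condition should be equivalent to $(\twoplustwo)$-freeness plus an additional local constraint, yielding a description in terms of Fishburn matrices with a specific sparsity pattern, from which a bijection to set partitions follows by a direct recursion on the last column.

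The main obstacle is the pattern $2321$, which permits intricate interleaving between a repeated value and smaller values, and for which no evident local characterization of avoiders seems to hold. I would attack it last along two parallel routes. The first is to construct an involution on $\Modasc$ that exchanges $2321$ with (say) $2231$, deducing Wilf equivalence from a case already handled; such an involution should act by locally swapping the rightmost occurrence of a repeated value with a neighbouring ascent top, and the key technical step is to verify that the swap preserves the ascent-top/leftmost-copy coincidence. The second route is to give a direct recursive bijection with set partitions by inserting the largest value last and arguing that the insertion position is uniquely determined by the target partition together with $2321$-avoidance. Either route reduces to a delicate interaction between the modified-ascent condition and $2321$-avoidance, and I would expect computer enumeration of small cases to be essential in identifying the correct local rule; once a bijection is fixed, tracking the ascent statistic across the reduction is a routine induction. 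I expect the $2321$ case to absorb the bulk of the effort, and a negative outcome on both routes would force a genuinely new idea — for instance, an explicit generating function argument using the kernel method applied to a suitable succession rule — rather than a bijective proof.
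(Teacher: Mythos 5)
Your proposal is a research plan rather than a proof: none of the six cases is actually established, and at the two places where you commit to a concrete mechanism the mechanism is either unsubstantiated or pointed in the wrong direction. For $212$, the observation that ``once a value reappears, no strictly smaller value may intervene'' is just a restatement of $212$-avoidance; it does not by itself yield a ``blockwise decomposition as a restricted growth function after a canonical relabelling,'' and no naive relabelling can work, since $1312\in\Modasc(212)\setminus\RGF$ while $1212\in\RGF\setminus\Modasc$. The paper instead derives the Stirling recurrence $b_{n,m}=(n-m+1)b_{n-1,m-1}+b_{n-1,m}$ from a recursive construction (insert a new strict maximum into one of the $n-m+1$ weak-descent sites, or extend the run of maxima), which rests on two nontrivial lemmas you would still need: that all copies of the maximum sit in consecutive positions in any modified ascent sequence, and that deleting the unique maximum of a $212$-avoider stays inside $\Modasc$. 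For $1212$ and $2132$ your detour through the transport framework is both unnecessary and misdirected: Theorem~\ref{transport_theorem_modasc_fish} describes $\F(p)$ for a \emph{permutation} pattern $p$ as $\gamma\bigl(\Modasc(B_p)\bigr)$, i.e.\ it converts permutation patterns into Cayley patterns, not the reverse; what actually makes these cases trivial is the set identity $\Modasc(212)=\Modasc(1212)=\Modasc(2132)$, which follows from the ascent-top/leftmost-copy coincidence and which your plan does not notice.

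For $2213$ and $2231$ the claim that the pulled-back condition ``should be equivalent to $(\twoplustwo)$-freeness plus an additional local constraint'' yielding sparse Fishburn matrices is pure speculation with no supporting computation; the working argument is a different recursion entirely, refining by the number of copies of $1$ and matching the recurrence $g_{n+1}(k)=k\sum_{j=k}^{n}g_n(j)$ satisfied by restricted growth functions graded by the length of their initial strictly increasing run, and the bijection requires a delicate repair step (relocating a block of consecutive $1$s) to preserve $\asctops=\nub$ --- a difficulty your sketch does not anticipate. Finally, for $2321$ you candidly offer two speculative routes and concede both may fail; since that case is not resolved here either (it is settled in separate work), you cannot claim a proof of the full conjecture. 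The honest summary is that your high-level target --- bijections to set partitions sending $\max(x)=|\asctops(x)|$ to $n+1$ minus the number of blocks --- is the right one, but every step that would turn the target into a theorem is missing.
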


The Bell numbers appear as sequence A000110 in the OEIS~\cite{Sl}:
$$
1, 1, 2, 5, 15, 52, 203, 877, 4140, 21147, 115975.
$$
Note that $12132$ is the only modified ascent sequence of length~$5$
that contains any of the patterns listed in Conjecture~\ref{conj_ds};
hence there are $52$ such sequences instead of $53$.
It is well known that the $n$th Bell number is equal to the number
of set partitions over $\lbrace 1,2,\dots,n\rbrace$, and the distribution
of blocks is given by the Stirling numbers of the second kind.

In Section~\ref{sec_prel}, we provide the necessary tools and definitions.
We define restricted growth functions and show that restricted
growth functions of length $n$ with maximum value $k$ encode
bijectively set partitions of $\lbrace 1,2,\dots,n\rbrace$ with
$k$ blocks.
We define modified ascent sequences and Fishburn permutations, as well
as the operation of Burge transpose that relates them.
We define classical patterns and mesh patterns, both on classical
permutations and on permutations with repeated entries, i.e.\ Cayley
permutations. We end the section with some general results concerning
the enumerative aspects of modified ascent sequences.

In Section~\ref{section_212}, we solve Conjecture~\ref{conj_ds} for
the first three patterns $212$, $1212$ and $2132$.
To start, we show that in fact the avoidance of any pattern in
$\lbrace 212,1212,2132,12132\rbrace$ determines the same
set of modified ascent sequences, adding $12132$ to the conjecture
in the process.
Then we find a recursive construction of $212$-avoiding
modified ascent sequences by successive insertion of new maxima.
This construction embodies well known equations defining
the Stirling numbers of the second kind, and a proof of
Conjecture~\ref{conj_ds} follows immediately.
We also show that restricted growth functions encode active sites
of $212$-avoiding modified ascent sequences in a way that is similar
to how ascent sequences encode active sites of Fishburn permutations.
En passant, we obtain a bijection between these two structures.
An alternative bijection is then obtained by slightly
tweaking the Burge transpose. Quite surprisingly, the same construction
seems to be working for the pattern $2213$ as well, a fact that we were
not able to prove.

In Section~\ref{section_2213_2231}, we solve the patterns $2213$ and $2231$.
We provide a recursive construction of $2213$- and $2231$-avoiding
modified ascent sequences in terms of the number of copies of $1$
they contain. The resulting equations are different from the one
obtained previously for the pattern~$212$, but they still lead to
the Bell numbers. A bijection with the set of restricted growth
functions is once again an immediate outcome of our approach.

In Section~\ref{section_fish_perms}, we describe by pattern avoidance the
sets of Fishburn permutations corresponding to modified ascent
sequences avoiding $212$, $2213$ and $2321$, respectively.
A description of the set corresponding to $2231$-avoiding
modified ascent sequences remains to be determined.
More suggestions for future work can be found in
Section~\ref{sec_final_remarks}.

\section{Preliminaries}\label{sec_prel}

Let $n$ be a natural number. An \emph{endofunction} of size $n$
is a map $x:[n]\to[n]$, where $[n]=\lbrace 1,2,\dots, n\rbrace$.
We often identify the endofunction $x$ with the word $x=x_1\dots x_n$,
where $x_i=x(i)$ for each $i\in[n]$.
If $n=0$, we allow the empty endofunction~$\epsilon$.
A \emph{Cayley permutation}~\cite{Ca,MF} is an endofunction
$x:[n]\to[n]$ such that $\img(x)=[k]$, for some $k\le n$.
Alternatively, a nonempty endofunction $x=x_1\cdots x_n$ is a Cayley permutation if it contains
at least a copy of every integer between $1$ and its maximum
value.
In this paper, given a set $A$ whose elements are equipped with a notion
of size, we will denote by $A_n$ the set of elements in $A$ that
have size $n$. Conversely, given a definition of $A_n$ (of elements
of size $n$) we let $A=\cup_{n\geq 0}A_n$.
For instance, we let $\Cay_n$ be the set of Cayley permutations
of size $n$ and $\Cay=\cup_{n\geq 0}\Cay_n$ be the set of all
Cayley permutations.
For $n\le 3$, we have $\Cay_0=\{\epsilon\}$,
$\Cay_1=\left\lbrace 1\right\rbrace$,
$\Cay_2=\left\lbrace 11,12,21\right\rbrace$ and
$$
\Cay_3=
\left\lbrace
111,112,121,122,123,132,211,212,213,221,231,312,321
\right\rbrace.
$$
It is well known that a Cayley permutation $x$ encodes the ballot
(ordered set partition) $B_1\dots B_k$, where $i\in B_{x_i}$
and $k=\max(x)$.
For instance,
$$
x=311241334
\quad\text{encodes the ballot}\quad
\lbrace 2,3,6\rbrace
\lbrace 4\rbrace
\lbrace 1,7,8\rbrace
\lbrace 5,9\rbrace.
$$
This encoding of ballots by Cayley permutations is bijective,
and thus Cayley permutations are counted by the Fubini numbers
(sequence A000670 in the OEIS~\cite{Sl}).
If we apply the same encoding to set partitions, listed
with minima of blocks in increasing order, we obtain
\emph{restricted growth functions}
$$
\RGF_n=
\left\lbrace
x_1\cdots x_n:\;
x_1=1,\;x_{i+1}\le\max(x_1\cdots x_i)+1\;
\text{ for each $i<n$}\right\rbrace.
$$
As a result, $|\RGF_n|$ is equal to the $n$th Bell number;
furthermore, the number of restricted growth
functions of size $n$ whose maximum value is $k$ equals the number of
set partitions of $[n]$ with $k$ blocks. These are counted by the
$(n,k)$th Stirling number of the second kind, denoted here by $S(n,k)$.
For $n=3$, we obtain
$$
\RGF_3=\{111,112,121,122,123\}.
$$
The shortest restricted growth function that is not a modified
ascent sequence (defined in Section~\ref{section_intro_fish})
is $1212$; on the other hand, $1312$ is a modified ascent
sequence, but not a restricted growth function.

Let $x\in\Cay_n$ and $y\in\Cay_k$ be two Cayley permutations,
with $k\le n$. Then $x$ \emph{contains} $y$ if $x$ contains a subsequence
$x_{i_1}x_{i_2}\cdots x_{i_k}$, with $i_1<i_2<\cdots<i_k$,
which is \emph{order isomorphic} to $y$; that is, $x_{i_s}<x_{i_t}$
if and only if $y_s<y_t$ and $x_{i_s}=x_{i_t}$ if and only if $y_s=y_t$.
In this case, we write $y\le x$ and $x_{i_1}x_{i_2}\cdots x_{i_k}\simeq y$;
the subsequence $x_{i_1}x_{i_2}\cdots x_{i_k}$ is called an
\emph{occurrence} of the \emph{pattern} $y$ in $x$.
Otherwise, we say that $x$ \emph{avoids} $y$.
We denote by $\Cay(y)$ the set of Cayley permutations that avoid $y$.
A notable example is the set of \emph{permutations} $\Sym=\Cay(11)$;
equivalently, $\Sym$ is the set of bijective endofunctions.
More generally, when $B$ is a set of patterns, $\Cay(B)$ shall denote
the set of Cayley permutations avoiding every pattern in $B$.
We use analogous notations for subsets of $\Cay$.
For instance, $\Modasc(212)$ denotes the set of modified ascent
sequences (defined in Section~\ref{section_intro_fish}) avoiding
the pattern $212$.
For a detailed introduction to permutation patterns we
refer to Bevan's note~\cite{Be}.

A more general notion of containment is obtained via
\emph{mesh patterns}~\cite{Cl} and \emph{Cayley-mesh patterns}~\cite{Ce}.
A mesh pattern is a pair $(y,R)$, where $y\in\Sym_k$ is a permutation
(classical pattern) and $R\subseteq\left[0,k\right]\times\left[0,k\right]$
is a set of pairs of integers. The pairs in $R$ identify the lower left
corners of unit squares in the plot of $x$ which specify forbidden regions.
An occurrence of the mesh pattern $(y,R)$ in the permutation $x$ is an
occurrence of the classical pattern $y$ such that no other points of $x$
occur in the forbidden regions specified by $R$.
Cayley-mesh patterns, i.e. mesh patterns on Cayley permutations,
are defined analogously, but with additional regions that allow the
possibility of having repeated entries.
In this paper, we will often define mesh patterns (both on permutations
and Cayley permutations) by plotting the underlying classical pattern,
with the forbidden regions shaded.
For instance, the mesh patterns that characterize Fishburn permutations
and modified ascent sequences (see Section~\ref{section_intro_fish}) are
illustrated in Figure~\ref{mesh_patterns_modasc}; here, $\fishpattern$
would be more extensively defined as
$$
\fishpattern=\left(231,R\right),
\quad\text{with }
R=\left\lbrace (1,0),(1,1),(1,2),(1,3),(0,1),(2,1),(3,1)\right\rbrace.
$$
Note that the shaded regions of $\fishpattern$ consist of rows or columns
only; that is, if a box is shaded, then its whole row or column is shaded
as well. A mesh pattern that satisfies this property is called a
\emph{bivincular pattern}~\cite{BMCDK}. Roughly speaking, shaded
columns impose a constraint of adjacency on the positions, while
shaded rows impose a constraint of adjacency on the values.
An occurrence of $\fishpattern$ is an occurrence of $231$ where
the (entries playing the role of) $2$ and $3$ are in consecutive
positions, and the $2$ and $1$ are consecutive in value.
As an example, the permutation $31524$ avoids~$\fishpattern$ but
contains~$231$, while the permutation $32541$ contains an occurrence
of~$\fishpattern$ realized by the entries $251$.

\begin{figure}
$$
\fishpattern \;=\;
\begin{tikzpicture}[scale=0.40, baseline=20.5pt]
\fill[NE-lines] (1,0) rectangle (2,4);
\fill[NE-lines] (0,1) rectangle (4,2);
\draw [semithick] (0.001,0.001) grid (3.999,3.999);
\filldraw (1,2) circle (6pt);
\filldraw (2,3) circle (6pt);
\filldraw (3,1) circle (6pt);
\end{tikzpicture}
\qquad
\mathfrak{a} \,=\,
\begin{tikzpicture}[scale=0.50, baseline=19pt]
\fill[NE-lines] (2,0) rectangle (3,3);
\draw [semithick] (0,0.85) -- (4,0.85);
\draw [semithick] (0,1.15) -- (4,1.15);
\draw [semithick] (0,1.85) -- (4,1.85);
\draw [semithick] (0,2.15) -- (4,2.15);
\draw [semithick] (1,0) -- (1,3);
\draw [semithick] (2,0) -- (2,3);
\draw [semithick] (3,0) -- (3,3);
\filldraw (1,2) circle (5pt);
\filldraw (2,1) circle (5pt);
\filldraw (3,2) circle (5pt);
\end{tikzpicture}
\qquad
\mathfrak{b} \,=\,
\begin{tikzpicture}[scale=0.50, baseline=19pt]
\fill[NE-lines] (1,0) rectangle (2,3);
\fill[NE-lines] (0,0.85) rectangle (0.85,1.15);
\draw [semithick] (0,0.85) -- (3,0.85);
\draw [semithick] (0,1.15) -- (3,1.15);
\draw [semithick] (0,1.85) -- (3,1.85);
\draw [semithick] (0,2.15) -- (3,2.15);
\draw [semithick] (1,0) -- (1,3);
\draw [semithick] (2,0) -- (2,3);
\filldraw (1,2) circle (5pt);
\filldraw (2,1) circle (5pt);
\end{tikzpicture}
$$
\caption{Mesh patterns such that $\F=\Sym(\fishpattern)$ and
$\Modasc=\Cay(\mathfrak{a},\mathfrak{b})$.}\label{mesh_patterns_modasc}
\end{figure}

\subsection{Modified ascent sequences and Fishburn permutations}\label{section_intro_fish}

Given a Cayley permutation $x$, let
$$
\asctops(x)= \{(1,x_1)\}\cup \{(i, x_i): 1 < i \le n,\, x_{i-1} < x_i\}
$$
be the set of \emph{ascent tops} and their indices, including the
first element; let also
$$
\nub(x) = \{(\min x^{-1}(j), j): 1\leq j\leq \max(x) \}
$$
be the set of \emph{leftmost copies} and their
indices\footnote{The name ``$\nub$'' comes from an Haskell function
that removes duplicate elements from a list, keeping only the
first occurrence of each element. One may also think of $\nub$
as a short for ``not used before''.}.
The set $\nub$ has recently played a central role in the
context of modified (difference) ascent
sequences~\cite{CeModasc,CCSmodasc,CCSselfmod}.
As an example, we have
$$
\asctops(1212)=\{(1,1),(2,2),(4,2)\}
\quad\text{while}\quad
\nub(1212)=\{(1,1),(2,2)\}.
$$
On the other hand,
$$
\asctops(1312)=\nub(1312)=\{(1,1),(2,3),(4,2)\}.
$$
When there is no ambiguity, we will sometimes abuse notation and simply
write $x_i\in\nub(x)$ or $x_i\in\asctops(x)$. 
If $x_i\in\nub(x)$ and $x_i=a$, we say that $x_i$ is the
leftmost copy of $a$ in $x$; or, that $x_i$ is a leftmost copy
in $x$.
Claesson and the current author~\cite{CC} proved the following
characterization of the set $\Modasc$ of \emph{modified ascent
sequences}\footnote{We depart slightly from the original
paper~\cite{BMCDK} where modified ascent sequences are zero-based.}, 
and we shall use it as the definition of $\Modasc$.

\begin{proposition}\cite[Lemma 2.1]{CC}\label{modasc_char}
We have
$$
\Modasc= \{x\in\Cay: \asctops(x)=\nub(x) \}.
$$
In particular, all the ascent tops have distinct values and
$\max(x)=|\asctops(x)|$.
\end{proposition}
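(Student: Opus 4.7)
The plan is to establish both inclusions by working from the original recursive definition of $\Modasc$ as the bijective image of the set of ascent sequences $\Ascseq$ under the hat map of~\cite{BMCDK}. Recall that an ascent sequence is a word $y = y_1 \cdots y_n$ with $y_1 = 1$ and $y_{i+1} \le 1 + |\{j < i : y_j < y_{j+1}\}|$, and that $\hat{y}$ is built inductively by appending $y_i$ at the right of $\hat{y}_1 \cdots \hat{y}_{i-1}$, after incrementing a prescribed subset of the previous entries so that the new rightmost position is a leftmost copy of its value precisely when $y_i$ creates an ascent in the underlying ascent sequence.

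For the forward inclusion $\Modasc \subseteq \{x \in \Cay : \asctops(x) = \nub(x)\}$, I would induct on $n$, maintaining the invariant $\asctops = \nub$ on each modified prefix. The base case is immediate. For the inductive step I would split on whether $y_i \le y_{i-1}$ or $y_i > y_{i-1}$. In the first case, the hat shift is designed so that the appended letter $\hat{y}_i$ coincides with an earlier entry strictly to its left, so no new pair enters $\nub$; the comparison $\hat{y}_i \le \hat{y}_{i-1}$ is also preserved, so no new pair enters $\asctops$ either. In the second case, $\hat{y}_i$ strictly exceeds $\hat{y}_{i-1}$ by construction and the shift guarantees that its value did not appear before, so the new rightmost position simultaneously enters $\asctops$ and $\nub$. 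Either way, no earlier pair in $\asctops$ or $\nub$ is disturbed, which closes the induction.

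For the reverse inclusion, given $x \in \Cay$ with $\asctops(x) = \nub(x)$, I would construct an ascent sequence $y$ with $\hat{y} = x$ by reversing the hat map one letter at a time from the right. Peeling off the last letter of $x$ and undoing the shift that the hat map would have applied at that step yields a shorter $x'$ still satisfying $\asctops = \nub$, together with a candidate value for $y_n$. The hypothesis $\asctops(x) = \nub(x)$ is exactly what makes the undoing well-defined, and verifying that the resulting $y$ meets the ascent sequence bound reduces to counting ascent tops of prefixes, which by the invariant coincide with the leftmost copies and hence with the introduced values.

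The in-particular clause is then immediate: elements of $\nub(x)$ have pairwise distinct values and $|\nub(x)| = \max(x)$ by definition, and the identity $\asctops(x) = \nub(x)$ transfers both properties. I expect the main technical obstacle to be formulating the hat shift and its inverse cleanly enough that the bookkeeping of $\asctops$ versus $\nub$ in the inductive step reduces exactly to the two-case split above; once the shift rule is pinned down, both directions are essentially mechanical.
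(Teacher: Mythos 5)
The paper does not actually prove this proposition: it is imported verbatim as \cite[Lemma 2.1]{CC} and then adopted as the \emph{definition} of $\Modasc$, so there is no in-paper argument to compare against. Judged on its own, your plan is the standard one and is essentially sound: it matches the recursive description of $\Modasc$ that the paper records right after the proposition (append a letter; if it is an ascent top, increment every earlier entry $\ge$ the new value), and both of your inductive cases check out. In the non-ascent case the appended value is at most $x_n \le \max(x_1\cdots x_n)$, hence already present, so nothing enters $\nub$; in the ascent case the shift vacates the value of the new letter, so it enters $\asctops$ and $\nub$ simultaneously; and the shift is a strictly order- and equality-preserving relabelling of the old entries, which is the (implicit but necessary) reason no earlier pair of $\asctops$ or $\nub$ is disturbed. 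The reverse direction by peeling the last letter also works: if $x_n$ is not an ascent top then by hypothesis it is not a leftmost copy, so deletion keeps a Cayley permutation; if it is an ascent top then it is the unique occurrence of its value, and deleting it while decrementing larger values undoes the shift and lands back in the allowed range $x_n \le \max + 1$. Two small blemishes: your stated ascent-sequence bound $y_{i+1}\le 1+|\{j<i: y_j<y_{j+1}\}|$ is the zero-based bound applied to one-based words (it should be $2+\mathrm{asc}(y_1\cdots y_i)$, else only the all-ones word survives), and your description of the hat shift is phrased in terms of the property it is meant to achieve rather than the actual rule (increment entries $\ge$ the new value), which you would need to pin down before the "mechanical" bookkeeping goes through. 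Neither affects the viability of the argument.
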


The equation $\asctops(x)=\nub(x)$ can be equivalently expressed
with Cayley-mesh patterns as
$$
\Modasc = \Cay(\mathfrak{a},\mathfrak{b}),
$$
where $\mathfrak{a}$ and $\mathfrak{b}$ are depicted in
Figure~\ref{mesh_patterns_modasc}. Indeed, for any Cayley permutation $x$,
it is easy to see that
$$
x\text{ avoids }\mathfrak{a}
\quad\text{if and only if}\quad
\asctops(x)\subseteq\nub(x)
$$
and
$$
x\text{ avoids }\mathfrak{b}
\quad\text{if and only if}\quad
\asctops(x)\supseteq\nub(x).
$$
Indeed, the rightmost entry in any occurrence of $\mathfrak{a}$ is
an ascent top which is not a leftmost copy; and, conversely, the
rightmost entry in any occurrence of $\mathfrak{b}$ is a leftmost copy
that is not an ascent top.
The set $\Modasc$ can be alternatively defined in a recursive fashion
as follows~\cite{CC}.
There is exactly one modified ascent sequence of length zero,
the empty word, and of length one, the single letter word $1$.
For $n\geq 1$, every $y\in\Modasc_{n+1}$ is of one of two forms depending
on whether the last letter forms an ascent with the penultimate letter:
\begin{itemize}
\item $y=x_1\cdots x_n\;x_{n+1}$,\,with\, $1\le x_{n+1}\le x_n$, or
\item $y=\tilde{x}_1\cdots\tilde{x}_n\;x_{n+1}$,\,with\,
$x_n<x_{n+1}\le 1+\max(x_1\cdots x_n)$,
\end{itemize}
where $x_1\cdots x_n\in\Modasc_{n}$ and, for $i\in[n]$,
$$
\tilde{x}_i=
\begin{cases}
x_i & \text{if }x_i<x_{n+1}\\
x_i+1 & \text{if }x_i\ge x_{n+1}.
\end{cases}
$$
In other words, each modified ascent sequence $x$ gives rise to
$\max(x)+1$ modified ascent sequences of length one more,
obtained by
\begin{itemize}
\item adding a new rightmost entry less than or equal
to $\max(x)+1$;
\item and, if the newly added element is an ascent top, increasing
by one all the entries of $x$ that are greater than or equal to the
new entry.
\end{itemize}

The set of Fishburn permutations is defined as $\F=\Sym(\fishpattern)$,
where $\fishpattern$ is the bivincular pattern depicted in
Figure~\ref{mesh_patterns_modasc}.
Claesson and the current author~\cite{CC} reformulated the original
bijection~\cite{BMCDK} relating $\Modasc$ and $\F$ in terms of
Burge transpose of Burge words.
Let $\WI_n$ be the subset of $\Cay_n$ consisting of the weakly
increasing Cayley permutations
$$
\WI_n=\{u\in\Cay_n:\;u_1\le u_2\le\dots\le u_n\}.
$$
Define the set of biwords
$$
\Bur_n= \{ (u,v)\in\WI_n\times\Cay_n:\;\Des(u)\subseteq\Des(v) \},
$$
where $\Des(v) = \{ i: v_i \geq v_{i+1}\}$ is the set of weak descents
of $v$.
Biwords in $\Bur_n$ are called \emph{Burge words} due to their connection
with the Burge variant of the RSK correspondence~\cite{Bu}.
Here we will write a Burge word either as a pair $(u,v)$, or, more
extensively, as a biword (i.e. two-line array)
$$
\binom{u}{v}=
\begin{pmatrix}
u_1 & u_2 &\dots& u_n\\
v_1 & v_2 &\dots& v_n
\end{pmatrix}.
$$
The \emph{Burge transpose} of the Burge word $(u,v)$ is the biword
$(u,v)^T$ obtained by turning each column of $(u,v)$ upside down,
and sorting the columns of the resulting biword in ascending order with
respect to the top entry, breaking ties by sorting in descending order
with respect to the bottom entry.
The reason why this operation is called a transposition lies in the
fact that Burge words are in bijection with \emph{Burge matrices},
i.e. matrices with nonnegative integer entries whose every row and column
has at least one nonzero entry. Under this bijection, the operation of
transposing a Burge word corresponds  to the usual matrix
transposition; that is, if $M$ is the matrix associated with $(u,v)$,
then the transpose of $M$ is associated with $(u,v)^T$.
Now, it is possible to show~\cite{CC} that $\Bur_n$ is closed
under transpose and $w=(u,v)$ is a Burge word if and only if $(w^T)^T=w$. 
Furthermore, if $x$ is a modified ascent sequence of length $n$
and $\identity=12\cdots n$ denotes the identity permutation
(of the same length), then the Fishburn permutation $p$ corresponding
to $x$ can be computed as
$$
\binom{\identity}{x}^T=\binom{\sort(x)}{p},
$$
where $\sort(x)$ is obtained by sorting the entries of $x$ in weakly
incrasing order. Define the map $\gamma:\Cay\to\Sym$ accordingly
by letting, for $x\in\Cay$,
$$
\binom{\identity}{x}^T=\binom{\sort(x)}{\gamma(x)}.
$$
Then, the restriction of $\gamma$ to $\Modasc$ is a size-preserving
bijection from $\Modasc$ to $\F$.
As an example to illustrate this construction, let $x=141233551$.
Note that $x$ is a modified ascent sequence, e.g. by
Proposition~\ref{modasc_char}, since
$$
\asctops(x)=\left\lbrace (1,1),(2,4),(4,2),(5,3),(7,5)\right\rbrace=\nub(x).
$$
Finally, we have
$$
\binom{\identity}{x}^T
=\binom{1\;2\;3\;4\;5\;6\;7\;8}{1\;4\;1\;2\;3\;3\;5\;1}^T
=\binom{1\;1\;1\;2\;3\;3\;4\;5}{8\;3\;1\;4\;6\;5\;2\;7}
$$
and the Fishburn permutation corresponding to $x$ is $\gamma(x)=83146527$.
It is easy to check that $\gamma(x)$ is in fact a Fishburn permutation,
that is, it avoids $\fishpattern$.

The following theorem describes how pattern avoidance
is transported from Fishburn permutations to modified
ascent sequences.

\begin{theorem}\cite[Theorem 5.1]{CC}\label{transport_theorem_modasc_fish}
For every permutation $p$, we have
$$
\F(p)=\gamma\bigl(\Modasc(B_p)\bigr),
$$
where $B_p=\lbrace x\in\Cay:\;\gamma(x)=p\rbrace$ is the
\emph{Fishburn basis} of $p$.
\end{theorem}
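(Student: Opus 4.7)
The plan is to reduce the theorem to two transport statements for the map $\gamma:\Cay\to\Sym$: \emph{Monotonicity}, that $y\le x$ in $\Cay$ implies $\gamma(y)\le\gamma(x)$ in $\Sym$; and \emph{Lifting}, that whenever $p\in\Sym$ satisfies $p\le\gamma(x)$ we can find $y\in\Cay$ with $y\le x$ and $\gamma(y)=p$. Granting these, the theorem is immediate: for $x\in\Modasc$ and $q=\gamma(x)\in\F$, Monotonicity and Lifting together say that $q$ contains $p$ if and only if $x$ contains some $y\in B_p$, so $q\in\F(p)$ iff $x\in\Modasc(B_p)$; since $\gamma|_{\Modasc}$ is a size-preserving bijection onto $\F$, this yields $\F(p)=\gamma(\Modasc(B_p))$.

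To prove Monotonicity and Lifting, the key observation is that Burge transpose is \emph{local on columns}: the sort used in its definition (top ascending, ties broken by bottom descending) compares columns pairwise based only on their own entries, so it commutes with restriction to any column subset. Explicitly, if $\pi$ is the column permutation of $[n]$ induced by transposing $(u,v)$, then for every $S\subseteq[n]$ one has $((u,v)|_S)^T=(u,v)^T|_{\pi(S)}$. Moreover, Burge transpose is equivariant under standardization of entries. Both facts follow from a direct inspection of the sorting rule.

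Given these compatibilities, Monotonicity is proved by picking an occurrence of $y$ in $x$ at positions $S=\{i_1<\cdots<i_k\}$, looking at the sub-biword $\binom{\identity}{x}\big|_S=\binom{i_1\;\cdots\;i_k}{x_{i_1}\;\cdots\;x_{i_k}}$, and transposing: the resulting biword sits inside $\binom{\identity}{x}^T=\binom{\sort(x)}{\gamma(x)}$ at the columns $\pi(S)$, and reading its bottom row yields an occurrence of $\gamma(y)$ in $\gamma(x)$ after standardization. Lifting runs the same argument in reverse, starting from an occurrence of $p$ in $\gamma(x)$ at a position set $T$, transposing back to the columns $\pi^{-1}(T)$ of $\binom{\identity}{x}$, and reading off $y\le x$; since $p$ and $\gamma(y)$ are classical permutations of equal length, order-isomorphism forces $p=\gamma(y)$.

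The main obstacle is to formulate carefully the commutation of Burge transpose with restriction and with standardization in the presence of repeated entries, where the tie-breaking rule is precisely what keeps the construction reversible and makes $\gamma$ well defined on $\Modasc$. Once this compatibility is pinned down, both transport statements reduce to straightforward reading off rows of biwords, and the theorem follows.
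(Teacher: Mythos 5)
The paper does not prove this statement itself---it imports it verbatim from \cite[Theorem~5.1]{CC}---and your argument is essentially the one given in that reference: the Burge transpose is determined by a pairwise, order-theoretic comparison of columns, hence commutes with restriction to column subsets and with standardization, and the two transport directions (monotonicity and lifting) follow exactly as you describe. Your outline is correct; the only steps you would still need to spell out are that a restriction of a Burge word is again a Burge word (so the transpose remains an involution on the sub-biword, which is what makes the lifting direction close up) and that the tie-breaking rule depends only on the relative order of entries, so it survives standardization.
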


Theorem~\ref{transport_theorem_modasc_fish} shows that the set $\F(p)$
of Fishburn permutations avoiding $p$ is associated via the bijection
$\gamma$ with the set $\Modasc(B_p)$ of modified ascent sequences
avoiding every pattern in $B_p$. A method to construct $B_p$
explicitly can be found in the same paper~\cite{CC}.

\subsection{General results on $\Modasc$}\label{section_prop_MA}

In this section we prove some properties of the set $\Modasc$ that
will be useful later.

\begin{proposition}\label{occ_of_max_are_cons}
In a modified ascent sequence, all the copies of its maximum value
are in consecutive positions.
\end{proposition}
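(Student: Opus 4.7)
The plan is to use the characterization $\asctops(x)=\nub(x)$ from Proposition~\ref{modasc_char}. Let $x=x_1\cdots x_n\in\Modasc$ and set $m=\max(x)$. Let $i_0$ be the position of the leftmost copy of $m$, so $(i_0,m)\in\nub(x)$. I want to show that if $j>i_0$ satisfies $x_j=m$, then $x_{j-1}=m$; iterating this yields that all copies of $m$ form a block of consecutive positions.

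The key observation is the following. Fix $j>i_0$ with $x_j=m$. Since the leftmost copy of $m$ sits at position $i_0\ne j$, we have $(j,x_j)\notin\nub(x)$. By Proposition~\ref{modasc_char}, $\asctops(x)=\nub(x)$, hence $(j,x_j)\notin\asctops(x)$. Because $j>i_0\ge 1$, the definition of $\asctops$ then forces $x_{j-1}\ge x_j=m$; since $m$ is the maximum of $x$, this gives $x_{j-1}=m$.

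Now I would conclude with a short downward induction on the copies of $m$. If the occurrences of $m$ are at positions $i_0<i_1<\cdots<i_k$, then applying the observation at $j=i_t$ for each $t\ge 1$ shows $x_{i_t-1}=m$, so $i_t-1$ lies in $\{i_0,\dots,i_{t-1}\}$, and by minimality $i_t-1=i_{t-1}$. Hence the positions of $m$ form the interval $[i_0,i_0+k]$.

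I do not expect any serious obstacle here: the statement follows almost immediately from the equality $\asctops(x)=\nub(x)$, the only mild subtlety being the case $j=1$, which is ruled out by $j>i_0\ge 1$ so that $x_{j-1}$ is well defined and the ascent-top condition reduces to $x_{j-1}<x_j$.
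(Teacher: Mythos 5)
Your proof is correct and rests on exactly the same key fact as the paper's: by $\asctops(x)=\nub(x)$, a non-leftmost copy of the maximum cannot be an ascent top, so its predecessor must equal the maximum. The paper phrases this as a contradiction via the rightmost sub-maximal entry between two copies (an occurrence of the pattern $\mathfrak{a}$), while you argue directly and iterate; the two arguments are essentially identical.
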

\begin{proof}
Let $x\in\Modasc$ and let $m=\max(x)$. We show that all the entries
of $x$ that are equal to $m$ are in consecutive positions.
Equivalently, we show that, if $x_i=x_j=m$ for some $i<j$,
then it must be $x_{\ell}=m$ for each $i<\ell<j$.
For a contradiction, suppose that there is at least one entry between
$x_i$ and $x_j$ that is smaller than $m$, and let
$$
\ell=\max\lbrace k:\;i<k<j\text{ and }x_k<m\rbrace
$$
be the index of the rightmost such entry.
Then $x_{\ell+1}=m$, $x_{\ell}<x_{\ell+1}$ and $x_ix_{\ell}x_{\ell+1}$
is an occurrence of $\mathfrak{a}$ in $x$, which is impossible due
to Proposition~\ref{modasc_char}.
\end{proof}

Given a natural number $n$, let $f_n=|\Modasc_n|$ denote the
$n$th Fishburn number. For $1\le k\le n$, let
$$
\Modasc_n(k)
=\lbrace x\in\Modasc_n:\;\text{$x$ contains $k$ copies of $\max(x)$}\rbrace
$$
and let $f_n(k)=|\Modasc_n(k)|$. Note that $f_n=\sum_{k=1}^n f_n(k)$.

\begin{proposition}\label{1_occ_of_max}
For $n\ge 1$, we have
$$
f_{n+1}(1)=f_{n+1}-f_n.
$$
\end{proposition}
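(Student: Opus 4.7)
The plan is to reformulate the identity as $f_{n+1}-f_{n+1}(1)=f_n$ and exhibit a size-decreasing bijection
\[
\phi:\{x\in\Modasc_{n+1}:\,x\text{ has at least two copies of }\max(x)\}\longrightarrow\Modasc_n.
\]
Given such an $x$, Proposition~\ref{occ_of_max_are_cons} tells us that the copies of $m=\max(x)$ occupy a block of consecutive positions $i,i+1,\dots,i+k-1$ with $k\geq 2$, so there is a canonical choice of entry to remove. I would define $\phi(x)$ to be the word obtained by deleting the last entry of this block (equivalently, the last copy of $m$ in $x$).

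The first thing to check is that $\phi(x)\in\Modasc_n$, which I would do via the characterization $\asctops=\nub$ from Proposition~\ref{modasc_char}. The deleted entry $m$ at position $i+k-1$ is preceded by another $m$ (at position $i+k-2$, which exists because $k\geq 2$), hence is neither an ascent top nor a leftmost copy, so its removal costs us nothing on either set. The only other position whose status could change is $i+k$ (if it exists): its new left neighbor is still an $m$, and by consecutivity of the max-block in $x$ the entry there is strictly less than $m$, so it is neither an ascent top before nor after deletion, and its leftmost-copy status is unaffected. All remaining ascent-top and leftmost-copy relations are clearly preserved, so $\asctops(\phi(x))=\nub(\phi(x))$.

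For the inverse, given $y\in\Modasc_n$ (with $n\geq 1$, so $y$ is nonempty) let $m=\max(y)$ and locate the consecutive block of copies of $m$; define $\phi^{-1}(y)$ by inserting an extra $m$ at the end of this block. The resulting word has the same maximum $m$ with at least two copies, and the same verification as above shows it lies in $\Modasc_{n+1}$. Since $\phi$ and $\phi^{-1}$ act on the max-block by removing/inserting one copy at its right end, they are mutually inverse. The identity $f_{n+1}(1)=f_{n+1}-f_n$ follows.

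The proof is essentially routine once Proposition~\ref{occ_of_max_are_cons} is in hand; the only delicate point is the boundary case in which the max-block sits at the very end of $x$ (so there is no position $i+k$ to worry about). I expect this to be handled with a single sentence rather than a separate case.
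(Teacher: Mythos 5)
Your proof is correct and takes essentially the same route as the paper: both delete the rightmost copy of the maximum value, using Proposition~\ref{occ_of_max_are_cons} to make this well defined and reversible. The paper packages the argument as a family of bijections $\Modasc_{n+1}(k)\to\Modasc_n(k-1)$ for $k\ge 2$ followed by a telescoping sum, whereas you take the union over $k$ in a single bijection onto $\Modasc_n$ and additionally spell out the $\asctops=\nub$ verification that the paper leaves implicit.
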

\begin{proof}
By Proposition~\ref{occ_of_max_are_cons}, in a modified
ascent sequence all the entries that are equal to its maximum value
are in consecutive positions. As a result, for $k\ge 2$, the removal of
the rightmost such entry yields a bijection from $\Modasc_{n+1}(k)$ to
$\Modasc_n(k-1)$. Thus $f_{n+1}(k)=f_n(k-1)$ and also
\begin{align*}
f_{n+1}&=\sum_{k=1}^{n+1}f_{n+1}(k)\\
&=f_{n+1}(1)+\sum_{k=2}^{n+1}f_{n+1}(k) & [f_{n+1}(k)=f_n(k-1)]\\
&=f_{n+1}(1)+\sum_{k=2}^{n+1}f_{n}(k-1) & [i=k-1]\\
&=f_{n+1}(1)+\sum_{i=1}^{n}f_{n}(i)\\
&=f_{n+1}(1)+f_n,
\end{align*}
from which $f_{n+1}(1)=f_{n+1}-f_n$ follows immediately.
\end{proof}

\begin{corollary}
For $n\ge 1$, we have
$$
f_n=\sum_{i=1}^n f_i(1);
$$
that is, Fishburn numbers are the partial sum of the sequence
$\lbrace f_n(1)\rbrace_{n\ge 1}$.
\end{corollary}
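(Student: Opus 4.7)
The plan is to deduce the corollary directly from Proposition~\ref{1_occ_of_max} by a telescoping argument. Writing that proposition with the index shift $i=n+1$, we have $f_i(1) = f_i - f_{i-1}$ for every $i \geq 2$. So the strategy is to sum this identity from $i=2$ to $i=n$, which collapses, and then handle $i=1$ as a separate base case.

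Concretely, I would first note the base case $f_1(1) = f_1 = 1$: the only modified ascent sequence of length $1$ is $1$, and its unique maximum appears once. Then I would write
\begin{align*}
\sum_{i=1}^{n} f_i(1) &= f_1(1) + \sum_{i=2}^{n}\bigl(f_i - f_{i-1}\bigr) \\
&= f_1 + (f_n - f_1) \\
&= f_n,
\end{align*}
where the middle line uses Proposition~\ref{1_occ_of_max} term-by-term and the telescoping of the sum.

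There is essentially no obstacle here: the proof is a one-line telescoping sum whose only subtlety is separating the $i=1$ term, since Proposition~\ref{1_occ_of_max} is stated only for $n \geq 1$ (hence the recursion applies for $i \geq 2$). Once the base value $f_1(1)=f_1$ is noted, the rest is automatic.
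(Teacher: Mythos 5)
Your proof is correct and is essentially the paper's argument: both rest on the identity $f_i(1)=f_i-f_{i-1}$ from Proposition~\ref{1_occ_of_max} together with the base value $f_1(1)=f_1$, the only cosmetic difference being that you telescope the sum while the paper unrolls the recursion $f_n=f_n(1)+f_{n-1}$ step by step.
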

\begin{proof}
The case $n=1$ holds since $f_1=f_1(1)$. For $n\ge 2$, we repeatedly iterate Proposition~\ref{1_occ_of_max} to obtain
$$
f_n=f_n(1)+f_{n-1}
=f_n(1)+f_{n-1}(1)+f_{n-2}
=\dots
=\sum_{i=1}^nf_i(1).
$$
\end{proof}

The first ten terms of the sequence $\lbrace f_n(1)\rbrace_{n\ge 1}$ are
$$
1,1,3,10,38,164,797,4321,25905,170368.
$$
We call these the \emph{$2$-Fishburn numbers}.
The $2$-Fishburn numbers can be alternatively expressed in terms of

\begin{equation}\label{eq_1}
f_{n,m}=|\lbrace x\in\Modasc_n:\;\max(x)=m\rbrace|
\quad\text{as}\quad
f_{n+1}(1)=\sum_{m=1}^{n}mf_{n,m}.
\end{equation}

Indeed, we have $f_n=\sum_{m=1}^nf_{n,m}$. Furthermore,
referring to the recursive construction of $\Modasc$ described
in Section~\ref{section_intro_fish}, if $x\in\Modasc_n$ has maximum
value $\max(x)=m$, then $x$ gives rise (by insertion of a new rightmost
entry) to $m+1$ modified ascent sequences of length $n+1$. Therefore,
$$
f_{n+1}=\sum_{m=1}^{n}(m+1)f_{n,m}
$$
and
\begin{align*}
f_{n+1}(1)
=f_{n+1}-f_n
=\sum_{m=1}^{n}(m+1)f_{n,m}-\sum_{m=1}^{n}f_{n,m}
=\sum_{m=1}^{n}mf_{n,m}.
\end{align*}

The triangle $f_{n,m}$ is sequence A137251 in the OEIS~\cite{Sl}.
Note that we were not able to find a combinatorial construction
that embodies Equation~\eqref{eq_1} directly.

Next we show that the insertion of a new strict maximum
between two consecutive entries of a modified ascent sequence
yields another modified ascent sequence if and only if the two entries
form a weak descent.

\begin{proposition}\label{insert_max}
Let $x\in\Modasc_n$ and let $m=\max(x)$. For $i\in[n]$, denote
by $x^{(i)}$ the sequence
$$
x^{(i)}=x_1\cdots x_i\;(m+1)\;x_{i+1}\cdots x_n
$$
obtained by inserting $m+1$ immediately after $x_i$. Then
$$
x^{(i)}\in\Modasc_{n+1}
\quad\text{if and only if}\quad
x_i\ge x_{i+1}\text{ or }i=n.
$$
\end{proposition}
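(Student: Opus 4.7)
The plan is to leverage the characterization $\Modasc=\{x\in\Cay:\asctops(x)=\nub(x)\}$ from Proposition~\ref{modasc_char}, and analyze how inserting the new maximum affects the ascent-top and leftmost-copy status at each position of $y\coloneqq x^{(i)}$. Since $m+1$ is a brand new value, $y$ is still a Cayley permutation. The entries outside the ``insertion zone'' should be essentially unaffected, so the entire argument concentrates on what happens at the new entry and the one immediately following it.

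First I would check the new entry itself: at position $i+1$ of $y$ we have $y_{i+1}=m+1$, which is strictly larger than every $y_j$, $j\le i$. Hence $y_{i+1}$ is both an ascent top (as $y_i=x_i<m+1$) and a leftmost copy. So position $i+1$ never causes trouble. Next I would handle positions to the left: for $j\le i$, both the values and their left-context in $y$ match those in $x$, so membership in $\asctops$ and $\nub$ is the same at these positions. Similarly, for $j\ge i+3$ the picture is a relabeling of positions $j-1\ge i+2$ of $x$, and since we only added a strictly larger value in between, leftmost-copy status is preserved; ascent-top status is preserved because $y_{j-1}=x_{j-2}$ and $y_j=x_{j-1}$.

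The whole question therefore reduces to position $i+2$ in $y$, provided $i<n$. Here $y_{i+1}=m+1>x_{i+1}=y_{i+2}$, so $y_{i+2}$ is \emph{not} an ascent top of $y$. On the other hand, $y_{i+2}$ is a leftmost copy of $y$ precisely when $x_{i+1}$ does not occur in $x_1\cdots x_i$, i.e.\ precisely when $x_{i+1}\in\nub(x)$. By Proposition~\ref{modasc_char} applied to $x$, the latter is equivalent to $x_{i+1}\in\asctops(x)$, which (since $i+1\ge 2$) is equivalent to $x_i<x_{i+1}$.

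Putting these observations together: the only potential violation of $\asctops(y)=\nub(y)$ lies at position $i+2$, and it happens exactly when $x_i<x_{i+1}$. Hence for $i<n$, $y\in\Modasc_{n+1}$ iff $x_i\ge x_{i+1}$; and for $i=n$ there is no position $i+2$ to worry about, so $y\in\Modasc_{n+1}$ unconditionally. This gives the ``if and only if'' of the statement. The only delicate point, and the one I would write out most carefully, is the bookkeeping that justifies ``position $i+2$ is the \emph{only} position where trouble can arise''; everything else is routine case-checking.
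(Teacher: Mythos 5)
Your proposal is correct and follows essentially the same route as the paper: both arguments rest on the characterization $\asctops=\nub$, observe that the new entry $m+1$ and all positions other than the one immediately following it are unaffected, and isolate the single potential violation at the entry $x_{i+1}$, which ceases to be an ascent top but remains a leftmost copy exactly when $x_i<x_{i+1}$. Your write-up is somewhat more explicit about the position-by-position bookkeeping, but the underlying argument is identical.
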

\begin{proof}
It is a direct consequence of the equality $\nub(x)=\asctops(x)$
that defines $\Modasc$. Indeed, if $x_i<x_{i+1}$, then
$x_{i+1}\in\asctops(x)=\nub(x)$.
Now, due to the insertion of $m+1$, the entry $x_{i+1}$ is no longer
an ascent top in $x^{(i)}$; however, it is still a leftmost copy.
Since the other pairs of consecutive entries are not affected by the
insertion of $m+1$, the set $\nub(x^{(i)})$ is strictly contained in
$\asctops(x^{(i)})$, and thus $x^{(i)}$ is not a modified ascent sequence.
On the other hand, if $x_i\ge x_{i+1}$ or $i=n$, all the ascent tops
and all the leftmost copies of $x$ are preserved in $x^{(i)}$. Also
the new entry $m+1$ is both an ascent top and a leftmost copy.
Hence $\asctops(x^{(i)})=\nub(x^{(i)})$ and $x^{(i)}$ is a
modified ascent sequence.
\end{proof}

\section{Patterns $212,1212,2132,12132$}\label{section_212}

In this section we solve Conjecture~\ref{conj_ds} for every pattern
$y\in\lbrace 212,1212,2132\rbrace$. In fact we show that
$$
\Modasc(212)=\Modasc(1212)=\Modasc(2132)=\Modasc(12132)
$$
and add $12132$ to the list of patterns.

\begin{lemma}\label{lemma_1y_y}
Let $y=y_1y_2\cdots y_k$ be a Cayley permutation. Suppose that $y_2=1$
is the only copy of $1$ in $y$. Then
$$
\Modasc(y)=\Modasc(1y).
$$
\end{lemma}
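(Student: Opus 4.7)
The inclusion $\Modasc(y) \subseteq \Modasc(1y)$ is immediate and I would dispatch it in one line: deleting the leading entry of any occurrence of $1y$ produces an occurrence of $y$, so every word avoiding $y$ automatically avoids $1y$. For the reverse inclusion I would argue contrapositively: assuming $x \in \Modasc$ contains $y$, I will produce an occurrence of $1y$ in $x$. Since $y_2 = 1$ is by hypothesis the unique copy of $1$ in $y$, what is required is an entry of $x$ whose value equals that of the $y_2$-role and that sits strictly to the left of the $y_1$-role.

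Concretely, I would pick an occurrence $x_{i_1} x_{i_2} \cdots x_{i_k} \simeq y$ in $x$ that minimizes the middle index $i_2$, and then assume for contradiction that no position $j_0 < i_1$ satisfies $x_{j_0} = x_{i_2}$. The minimality of $i_2$ also forbids any copy of $x_{i_2}$'s value at a position $j$ with $i_1 < j < i_2$, because replacing $x_{i_2}$ by such an $x_j$ would be another valid occurrence of $y$ with smaller middle index. Hence $x_{i_2}$ would be the leftmost copy of its value in all of $x$, so Proposition~\ref{modasc_char} upgrades this to $(i_2, x_{i_2}) \in \nub(x) = \asctops(x)$, forcing $x_{i_2 - 1} < x_{i_2}$. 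A quick check rules out $i_1 = i_2 - 1$, since this would give $x_{i_1} = x_{i_2-1} < x_{i_2}$ against $x_{i_1} > x_{i_2}$; therefore $i_1 < i_2 - 1$. Because $y_2 = 1$ is the strict minimum of $y$, each $x_{i_j}$ with $j \neq 2$ exceeds $x_{i_2}$ and hence also $x_{i_2 - 1}$, so $x_{i_1} x_{i_2 - 1} x_{i_3} \cdots x_{i_k}$ is itself an occurrence of $y$ with middle index $i_2 - 1 < i_2$, contradicting the choice of the original occurrence.

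The only place the $\Modasc$ hypothesis truly enters is the step $\nub(x) = \asctops(x)$, which converts the leftmost-copy property of $x_{i_2}$ into the existence of the smaller neighbor $x_{i_2 - 1}$; I anticipate this to be the main conceptual move. Once it is in hand, the choice to minimize $i_2$ (rather than, say, $i_1$) makes the replacement $x_{i_2} \to x_{i_2-1}$ yield the contradiction instantly. Beyond that, the only thing to verify carefully is that the rearranged subsequence is still a genuine occurrence of $y$, which comes down precisely to the hypothesis that $y_2$ is the unique and strictly smallest entry of $y$.
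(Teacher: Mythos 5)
Your proof is correct and follows essentially the same route as the paper's: both normalize the chosen occurrence of $y$ so that the entry playing the role of $y_2=1$ is forced, via the defining equality $\asctops(x)=\nub(x)$, to admit an earlier copy lying to the left of the entry playing the role of $y_1$. The paper argues directly (taking the leftmost minimum of the contiguous segment between the first and third entries, observing it is not an ascent top, and hence not a leftmost copy), whereas you package the same idea as a minimal-counterexample argument on $i_2$; the substance is identical.
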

\begin{proof}
Let $x\in\Modasc$. If $x$ avoids $y$, then it avoids $1y$.
Hence the inclusion $\Modasc(y)\subseteq\Modasc(1y)$ holds.
To prove the opposite inclusion, we show that if $x$ contains $y$,
then it contains $1y$ too.
Let $x_{i_1}x_{i_2}\cdots x_{i_k}$ be an occurrence of $y$ in $x$,
with $i_1<i_2<\dots<i_k$. Let $x_j$ be the leftmost smallest
entry between $x_{i_1}$ and $x_{i_3}$; more formally, let
$$
j=\min\lbrace
j:\;i_1<j<i_3,\;x_j=\min\left(x_{i_1}x_{i_1+1}\cdots x_{i_3}\right)
\rbrace.
$$
Note that $x_{i_1}x_jx_{i_3}\cdots x_{i_k}\simeq y$ since $x_j\le x_{i_2}$
and $y_2=1$ is the only copy of $1$ in $y$.
Let $j'$ be the index of the leftmost copy of $x_{j}$ in $x$.
Due to our choice of $j$, we have that $(j,x_{j})\notin\asctops(x)=\nub(x)$.
In particular, it must be $j'<i_1$. Hence $x_{j'}x_{i_1}x_{j}x_{i_3}\cdots x_{i_k}$
is an occurrence of $1y$ in $x$, as desired.
\end{proof}

\begin{corollary}\label{212_1212_and_2132_12132}
We have
$$
\Modasc(212)=\Modasc(1212)
\quand
\Modasc(2132)=\Modasc(12132).
$$
\end{corollary}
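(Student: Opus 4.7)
The plan is to derive this corollary as a direct application of Lemma~\ref{lemma_1y_y}. All the work is in verifying, for each relevant pattern, that the hypothesis of the lemma holds---namely, that $y$ is a Cayley permutation in which $y_2=1$ is the unique copy of $1$.

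First I would handle the pattern $y=212$. Reading its letters, $y_1=2$, $y_2=1$, $y_3=2$; this is plainly a Cayley permutation, and the single occurrence of $1$ in $y$ is at position $2$. Since prepending a $1$ to a Cayley word already containing $1$ preserves the Cayley property, the word $1y=1212$ is again a Cayley permutation. Therefore Lemma~\ref{lemma_1y_y} applies and gives
\[
\Modasc(212)=\Modasc(1212).
\]

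Next I would do the same for $y=2132$. Its letters are $y_1=2$, $y_2=1$, $y_3=3$, $y_4=2$, so $y$ is a Cayley permutation with $y_2=1$ the unique copy of $1$. Prepending $1$ yields $1y=12132$, still a Cayley permutation. Lemma~\ref{lemma_1y_y} then gives
\[
\Modasc(2132)=\Modasc(12132),
\]
which completes both equalities.

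There is no real obstacle here: the corollary is just the instantiation of the lemma at $y=212$ and $y=2132$, and the only thing to check is the hypothesis on the position of the unique $1$. If one wanted to be slightly more streamlined in writing, both cases could be dispatched in a single sentence after a brief remark that $212$ and $2132$ each have their unique $1$ in second position.
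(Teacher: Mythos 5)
Your proof is correct and matches the paper's own argument exactly: both equalities are immediate instances of Lemma~\ref{lemma_1y_y} with $y=212$ and $y=2132$, after checking that each has its unique $1$ in the second position. The extra verification you include is fine but not needed beyond what the paper states.
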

\begin{proof}
Both equalities follow immediately from Lemma~\ref{lemma_1y_y} by
letting $y=212$ and $y=2132$, respectively.
\end{proof}

\begin{proposition}\label{212_eq_2132}
We have
$$
\Modasc(212)=\Modasc(2132).
$$
\end{proposition}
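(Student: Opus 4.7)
The statement has two inclusions, one trivial and one requiring the modified ascent sequence structure.

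For the inclusion $\Modasc(212)\subseteq\Modasc(2132)$, I would argue by contraposition: every occurrence of $2132$ already contains $212$, since selecting positions $1,2,4$ of a $2132$-occurrence gives a $v\,u\,v$ pattern with $u<v$. This observation uses nothing about $\Modasc$, so the inclusion holds even at the level of arbitrary Cayley permutations.

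For the reverse inclusion $\Modasc(2132)\subseteq\Modasc(212)$, I would again argue by contraposition: suppose $x\in\Modasc$ contains an occurrence $x_ix_jx_k$ of $212$ (so $i<j<k$ and $x_i=x_k=v>x_j$); the plan is to produce a strictly larger entry between position $j$ and position $k$ to serve as the ``$3$'', which together with $x_i,x_j,x_k$ gives a $2132$. To locate this entry, I would take $\ell_0$ to be the leftmost index such that $x_{\ell_0}=x_{\ell_0+1}=\cdots=x_k=v$, i.e.\ the start of the maximal block of $v$'s ending at position $k$. Since $x_j<v$ sits strictly between $i$ and $k$, this block does not reach $j$, so $\ell_0>j$.

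The crux now is to apply the defining property of $\Modasc$ at index $\ell_0$. The entry $x_{\ell_0}=v$ is \emph{not} a leftmost copy of $v$ (that distinction belongs to some position $\leq i<\ell_0$), so by Proposition~\ref{modasc_char} it is not an ascent top either, forcing $x_{\ell_0-1}\geq v$. By maximality of the block, $x_{\ell_0-1}\neq v$, hence $x_{\ell_0-1}>v$. Combined with $x_j<v$, this also rules out $\ell_0-1=j$, so $j<\ell_0-1<k$. The four entries $x_i,x_j,x_{\ell_0-1},x_k$ then form a $2132$-occurrence.

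The main (and really only) obstacle is ensuring that the candidate index $\ell_0-1$ is strictly to the right of $j$; this is where the careful extraction of the maximal trailing $v$-block is needed, and it is also the step that genuinely uses Proposition~\ref{modasc_char}. Neither direction requires the stronger Proposition~\ref{occ_of_max_are_cons}, since $v$ need not be the maximum of $x$.
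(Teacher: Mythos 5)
Your proposal is correct and follows essentially the same route as the paper: both directions hinge on the observation that a repeated value is not a leftmost copy, hence (by Proposition~\ref{modasc_char}) not an ascent top, so walking left through the plateau of copies of $v$ ending at position $k$ produces an entry strictly larger than $v$ strictly between $j$ and $k$. The paper phrases this as an iteration (``if $x_{k-1}=x_k$, repeat the argument on $x_ix_jx_{k-1}$''), whereas you unroll it by choosing the left endpoint $\ell_0$ of the maximal block directly; the content is identical.
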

\begin{proof}
The inclusion $\Modasc(212)\subseteq\Modasc(2132)$ is trivial.
To prove the opposite inclusion, suppose that $x$ contains an
occurrence $x_ix_jx_k$ of $212$. We show that $x$ contains $2132$.
Note that $(k,x_k)\notin\nub(x)$ since $x_i=x_k$ and $i<k$.
Thus, due to the equality $\nub(x)=\asctops(x)$ defining $\Modasc$,
it must be $x_{k-1}\ge x_k$. If $x_{k-1}>x_k$, then $x_ix_jx_{k-1}x_k$
is an occurrence of $2132$ in $x$, as wanted.
Finally, if $x_{k-1}=x_k$, then we can repeat the same argument on the
occurrence $x_ix_jx_{k-1}$ of $212$ until we eventually fall back in
the previous case.
\end{proof}

By Corollary~\ref{212_1212_and_2132_12132} and Proposition~\ref{212_eq_2132},
we have
$$
\Modasc(212)=\Modasc(1212)=\Modasc(2132)=\Modasc(12132).
$$
For the rest of this section, let $B=\Modasc(212)$.
For $n\ge 1$ and $1\le k,m\le n$, in analogy with the definitions
of $f_n(k)$ and $f_{n,m}$ given in Section~\ref{section_prop_MA},
define the sets

\begin{center}
\begin{tabular}{p{1.34cm}l}
$B_n(k)$     & $=\;\lbrace x\in B_n:\;
\text{$x$ contains $k$ copies of $\max(x)$}\rbrace$;\\
$B_{n,m}$    & $=\;\lbrace x\in B_n:\;\max(x)=m\rbrace$;\\
$B_{n,m}(k)$ & $=\;B_n(k)\cap B_{n,m}$.
\end{tabular}
\end{center}

Denote their cardinalities by
$$
\begin{matrix}
b_n=|B_n|,
& b_{n}(k)=|B_n(k)|,
& b_{n,m}=|B_{n,m}|,
& b_{n,m}(k)=|B_{n,m}(k)|.
\end{matrix}
$$
We wish to prove that $b_{n,m}=S(n,n-m+1)$, where $S(n,j)$ is
the $(n,j)$th Stirling number of the second kind.
Since in any modified ascent sequence the number of ascent tops
equals the maximum value, this settles Conjecture~\ref{conj_ds}
for $y\in\lbrace 212,1212,2132,12132\rbrace$.

Let us start by showing that if $x\in B_{n+1}(1)$ and $m=\max(x)$,
then the sequence obtained by removing the only copy of $m$ from $x$
is a modified ascent sequence.
Together with Proposition~\ref{insert_max}, this implies that every
such $x\in B_{n+1}(1)$ is obtained uniquely from an element of $B_n$
by inserting a new strict maximum between two consecutive entries that
form a weak descent.
Building upon this property, in combination with the fact that all the
copies of $\max(x)$ are in consecutive positions (by
Proposition~\ref{occ_of_max_are_cons}), we will obtain a recursive
construction for the set $B_n$.

\begin{proposition}\label{remove_max}
Let $x\in B_{n+1}(1)$. Let $x_i$ be the only copy of $\max(x)$ in $x$.
Denote by $\tilde{x}$ the sequence
$$
\tilde{x}=x_1\cdots x_{i-1}x_{i+1}\cdots x_{n+1}
$$
obtained from $x$ by removing $x_i$. Then $\tilde{x}\in B_n$.
\end{proposition}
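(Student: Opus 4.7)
The plan is to verify two things: $\tilde{x}$ avoids $212$, and $\tilde{x}$ is a modified ascent sequence. The first is immediate, since removing an entry cannot introduce a new occurrence of a pattern; so the content of the proposition is really the second claim, which I would check via the criterion $\asctops(\tilde{x}) = \nub(\tilde{x})$ from Proposition~\ref{modasc_char}. Two boundary cases can be disposed of quickly. If $i = n+1$, there is nothing to stitch and the equality is inherited directly from $\asctops(x) = \nub(x)$. If $i = 1$ and $n+1 \ge 2$, I would observe that this situation cannot occur: $x_2 < m = x_1$ would then be the leftmost occurrence of its value in $x$, hence $x_2 \in \nub(x) = \asctops(x)$, but this forces $x_1 < x_2$, contradicting $x_1 = m$. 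The remaining trivial case $n+1 = 1$ gives $\tilde{x} = \epsilon \in B_0$.

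So we may assume $1 < i \le n$. The next step is to track how $\asctops$ and $\nub$ change when $x_i$ is removed. Because $x_i = m$ is the unique copy of $\max(x)$, the set $\nub$ is affected only in the value $m$: every other value keeps its leftmost occurrence at an unchanged absolute position in $\tilde{x}$. For ascent tops, every entry $x_j$ with $j \notin \{i, i+1\}$ retains its predecessor, hence its ascent-top status, in $\tilde{x}$, while $x_i$ is deleted. The delicate entry is $x_{i+1}$: in $x$ it is not an ascent top because $x_i = m > x_{i+1}$, but in $\tilde{x}$ its new predecessor is $x_{i-1}$, so it becomes an ascent top exactly when $x_{i-1} < x_{i+1}$. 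Combining these observations, $\asctops(\tilde{x}) = \nub(\tilde{x})$ reduces to showing that $x_{i+1}$ does not become a new ascent top, i.e., $x_{i-1} \ge x_{i+1}$.

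The main obstacle, and the point where $212$-avoidance is essential, is this last inequality. Suppose for contradiction that $x_{i-1} < x_{i+1}$, and set $v = x_{i+1} < m$. The value $v$ cannot be a leftmost copy at position $i+1$: otherwise $v \in \nub(x) = \asctops(x)$ would require the ascent $x_i < x_{i+1}$, which is false. Hence $v$ occurs earlier in $x$ at some position $j < i+1$; since $x_i = m \ne v$, we actually have $j < i$. But then $x_j x_i x_{i+1}$ is an occurrence of $212$ in $x$, contradicting $x \in B$. This is the heart of the argument; everything else is bookkeeping with the definitions of $\asctops$ and $\nub$.
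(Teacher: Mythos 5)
Your overall strategy is the paper's: reduce everything to the single inequality $x_{i-1}\ge x_{i+1}$, and derive it from $212$-avoidance via the leftmost copy of the value $v=x_{i+1}$. The bookkeeping and the boundary cases are fine (indeed more carefully spelled out than in the paper). But the step you yourself call the heart of the argument is wrong as written: the subsequence $x_j x_i x_{i+1}$ has values $v,\,m,\,v$ with $v<m$, so it is order isomorphic to $121$, not to $212$, and $121$ is not forbidden (e.g.\ $x=121$ itself lies in $B_3$). Note also that your derivation of the ``contradiction'' never uses the hypothesis $x_{i-1}<x_{i+1}$, which is a sign that something is off: if the step were valid it would rule out configurations that actually occur.

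The repair is small and is exactly what the paper does: under the assumption $x_{i-1}<x_{i+1}=v$, take the middle letter of the forbidden pattern to be $x_{i-1}$ rather than $x_i$. Since $x_j=v$ is the leftmost copy of $v$ and $x_{i-1}<v$ forces $j\neq i-1$, hence $j<i-1$, the subsequence $x_j x_{i-1} x_{i+1}$ has values $v,\,x_{i-1},\,v$ with $x_{i-1}<v$, i.e.\ a genuine occurrence of $212$, giving the desired contradiction. With that one substitution your proof coincides with the paper's.
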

\begin{proof}
It is easy to see that $\tilde{x}$ avoids $212$ since $x$ does so.
We shall prove that $\tilde{x}$ is a modified ascent sequence by
showing that $\asctops(\tilde{x})=\nub(\tilde{x})$.
Recall that $\asctops(x)=\nub(x)$ since $x\in\Modasc$.
In particular, we have $\max(x)=x_i>x_{i+1}$ and thus
$x_{i+1}\notin\nub(x)$.
Let $j$ be the index of the leftmost copy of $x_{i+1}$ in $x$,
so that $x_j=x_{i+1}$ and $(j,x_j)\in\nub(x)$. Note that it must be
$x_{i-1}\ge x_{i+1}$, or else we would have an occurrence
$x_jx_{i-1}x_{i+1}$ of $212$ in $x$, which is impossible.
Therefore, $x_{i-1}\ge x_{i+1}$, i.e. $x_{i+1}\notin\asctops(\tilde{x})$,
and also $x_{i+1}\notin\nub(\tilde{x})$. Since the other pairs
of consecutive elements are not affected by the removal of $x_i$, 
we have $\asctops(\tilde{x})=\nub(\tilde{x})$, which concludes
the proof.
\end{proof}

In general, the previous proposition does not hold if $x$ contains $212$.
For instance, $12132\in\Modasc_5(1)$, but $1212$ is not a modified ascent
sequence.

It is easy to see that, if $x$ avoids $212$ and $x^{(i)}$ is defined
as in Proposition~\ref{insert_max}, i.e. by inserting a new strict maximum
immediately after $x_i$, then $x^{(i)}$ avoids $212$ as well.
Indeed the new entry $m+1$, where $m=\max(x)$, can not be part of an
occurrence of $212$ in $x^{(i)}$. Therefore, by Proposition~\ref{insert_max}
and Proposition~\ref{remove_max}, each modified ascent sequence $x$
in $B_{n+1}(1)$ is obtained uniquely from some $\tilde{x}\in B_{n}$ by
inserting a new entry equal to $\max(x)+1$, either between two consecutive
entries $x_i\ge x_{i+1}$ or after $x_n$.
Recall also that $\max(x)=|\asctops(x)|$.
Therefore, if $x\in B_{n,m}$, then there are exactly $n-m+1$ positions
where the insertion of a new strict maximum yields an element
of $B_{n+1}(1)$. Thus
$$
b_{n+1,m+1}(1)=(n-m+1)b_{n,m}
\quand
b_{n+1}(1)=\sum_{m=1}^n(n-m+1)b_{n,m}.
$$
Furthermore, by Proposition~\ref{occ_of_max_are_cons},
all the copies of $\max(x)$ are in consecutive positions, hence
$$
b_{n+1,m}(k+1)=b_{n,m}(k).
$$
Putting everything together, we obtain
\begin{align*}
b_{n,m}&=\sum_{i=1}^n b_{n,m}(i)\\
&=b_{n,m}(1)+\sum_{i=2}^n b_{n,m}(i) & [b_{n,m}(1)=(n-m+1)b_{n-1,m-1}]\\
&=(n-m+1)b_{n-1,m-1}+\sum_{i=2}^n b_{n-1,m}(i-1) & [j=i-1]\\
&=(n-m+1)b_{n-1,m-1}+\sum_{j=1}^{n-1} b_{n-1,m}(j)\\
&=(n-m+1)b_{n-1,m-1}+b_{n-1,m}.
\end{align*}
Thus, the coefficients $b_{n,m}$ obey the recurrence
\begin{equation}\label{eq_stirl}
b_{n,m}=(n-m+1)b_{n-1,m-1}+b_{n-1,m}.
\end{equation}
The Stirling numbers of the second kind are defined by
$$
\begin{cases}
S(n,n)=1 & n\ge 0;\\
S(n,0)=S(0,n)=0 & n>0;\\
S(n,k)=kS(n-1,k)+S(n-1,k-1) & 0<k<n.
\end{cases}
$$
We prove that $b_{n,m}=S(n,n-m+1)$.
It is easy to see that the initial conditions are satisfied.
For instance, $b_{n,1}=S(n,n)=1$ since the only $x\in B_{n,1}$
is $x=11\cdots 1$.
Let $n\ge 2$ and $m\in [n]$; using induction on $n$ and
Equation~\eqref{eq_stirl}:
\begin{align*}
b_{n,m}&=(n-m+1)b_{n-1,m-1}+b_{n-1,m}\\
&=(n-m+1)S(n-1,n-m+1)+S(n-1,n-m) & [k=n-m+1]\\
&=kS(n-1,k)+S(n-1,k-1)\\
&=S(n,k)\\
&=S(n,n-m+1).
\end{align*}

\begin{theorem}\label{proof_212}
Let $y\in\lbrace 212,1212,2132,12132\rbrace$. Then $|\Modasc_n(y)|$
is equal to the $n$th Bell number. Furthermore, the number of
modified ascent sequences in $\Modasc_n(y)$ whose maximum value
is equal to $m$ is given by the $(n,n-m+1)$th Stirling
number of the second kind.
\end{theorem}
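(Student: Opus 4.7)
The plan is to combine the ingredients proved in this section into a single inductive argument. By Corollary~\ref{212_1212_and_2132_12132} and Proposition~\ref{212_eq_2132}, the four sets $\Modasc_n(212)$, $\Modasc_n(1212)$, $\Modasc_n(2132)$, $\Modasc_n(12132)$ coincide, so it suffices to work with $B = \Modasc(212)$. The refined statement I would prove is $b_{n,m} = S(n, n-m+1)$; summing over $m$ then gives $|B_n| = \sum_k S(n,k)$, the $n$th Bell number, and recalling that in a modified ascent sequence $\max(x)$ equals $|\asctops(x)|$ (Proposition~\ref{modasc_char}) takes care of the ascent-distribution half of Conjecture~\ref{conj_ds}.

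For the refined claim, the plan is to verify that the recurrence
\begin{equation*}
b_{n,m} = (n-m+1)\, b_{n-1, m-1} + b_{n-1,m}
\end{equation*}
is the Stirling recurrence under the substitution $k = n-m+1$, then check base cases and conclude by induction on $n$. To derive the recurrence itself I would partition $B_n$ according to the number $k$ of copies of the maximum. For $k = 1$, Propositions~\ref{insert_max} and~\ref{remove_max} (together with the trivial observation that inserting a new strict maximum into a $212$-avoider produces a $212$-avoider) set up a bijection between $B_n(1) \cap B_{n,m}$ and pairs $(\tilde{x}, i)$ with $\tilde{x} \in B_{n-1,m-1}$ and $i$ a legal insertion slot in $\tilde{x}$ (a weak descent or the rightmost position); there are exactly $(n-1) - (m-1) + 1 = n - m + 1$ such slots. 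For $k \geq 2$, Proposition~\ref{occ_of_max_are_cons} guarantees that the copies of the maximum are consecutive, so deleting the rightmost such copy is a bijection from $B_n(k) \cap B_{n,m}$ onto $B_{n-1}(k-1) \cap B_{n-1,m}$; summing over $k \geq 2$ gives the $b_{n-1,m}$ term.

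The main obstacle, already addressed in Proposition~\ref{remove_max}, is showing that removing the unique copy of the maximum from an element of $B_{n+1}(1)$ produces a \emph{modified} ascent sequence, not merely a Cayley permutation. The $212$-avoidance hypothesis is essential: the example $12132 \mapsto 1212$ shows that without it the deletion can create a leftmost copy that is not an ascent top, breaking Proposition~\ref{modasc_char}. Once that single structural point is in place, the rest of the proof is bookkeeping that matches a combinatorial recurrence to the classical Stirling recurrence, with the trivial base case $b_{n,1} = 1 = S(n,n)$ supplied by the constant sequence $11\cdots 1$.
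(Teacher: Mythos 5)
Your proposal is correct and follows essentially the same route as the paper: reduce to $B=\Modasc(212)$ via Corollary~\ref{212_1212_and_2132_12132} and Proposition~\ref{212_eq_2132}, split $B_{n,m}$ by the number of copies of the maximum, use Propositions~\ref{insert_max}, \ref{remove_max} and~\ref{occ_of_max_are_cons} to obtain the recurrence $b_{n,m}=(n-m+1)b_{n-1,m-1}+b_{n-1,m}$, and match it to the Stirling recurrence under $k=n-m+1$. The slot count $n-m+1$ and the identification of Proposition~\ref{remove_max} as the one place where $212$-avoidance is genuinely needed are both exactly as in the paper.
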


By Proposition~\ref{1_occ_of_max}, modified ascent sequences
that contain exactly one copy of their maximum value
are counted by the $2$-Fishburn numbers. In other words,
$f_{n+1}(1)=f_{n+1}-f_n$ and Fishburn numbers are the partial sums
of the $2$-Fishburn numbers.
Similarly, since $|B_n|$ is equal to the $n$th Bell number, the
sequence $\lbrace b_n(1)\rbrace_{n\ge 1}$ is given by the
\emph{$2$-Bell numbers} (A005493~\cite{Sl}).
This can be proved with a completely analogous argument
by observing that $b_{n+1}(k+1)=b_n(k)$. We leave the details
to the reader.

Next we define a bijection $\phi$ from $\RGF_n$ to $B_n$.
As suggested by Theorem~\ref{proof_212}, we shall in fact map
$\RGF_{n,n-m+1}$ to $B_{n,m}$, where
$$
\RGF_{n,m}=\lbrace x\in\RGF_n:\;\max(x)=m\rbrace.
$$
Indeed, under the usual encoding of set partitions by restricted
growth functions described in Section~\ref{sec_prel}, the set
$\RGF_{n,m}$ corresponds to set partitions of $[n]$ with $m$ blocks.
We proceed as follows. First we describe more directly the recursive
construction of $B_{n,m}$ embodied by Equation~\eqref{eq_stirl}.
Then we introduce the related notion of \emph{active site},
and show that restricted growth functions encode this construction
in a way that is similar to how ascent sequences encode active
sites of Fishburn permutations.

Let $x\in B_{n,m}$. As observed in the paragraph leading to
Proposition~\ref{remove_max}, if $x$ contains one copy of $m=\max(x)$,
then $x$ is obtained uniquely from some $\tilde{x}\in B_{n-1,m-1}$
by inserting $m$ either between two consecutive elements that form
a weak descent or after the last entry. Note that there are
$n-m+1$ such positions in $\tilde{x}$ since
$\max(\tilde{x})=|\asctops(\tilde{x})|$.
On the other hand, suppose that $x$ contains at least two copies
of $m$. By Proposition~\ref{occ_of_max_are_cons}, all the copies of $m$
are in consecutive positions, hence the removal of the rightmost copy
of $m$ from $x$ determines (uniquely) an element of $B_{n-1,m}$.
In other words, every $x\in B_{n,m}$ gives rise to $(n-m+1)+1$ sequences
in $B_{n+1}$:

\begin{itemize}
\item[$(i)$] The $n-m+1$ sequences in $B_{n+1,m+1}$ obtained by inserting
a new strict maximum between two entries $x_i\ge x_{i+1}$,
including the last spot after $x_n$; we call these positions the
\emph{active sites} of $x$. We also label the active sites with the
integers $1,2,\dots,n-m+1$, going from left to right.
\item[$(ii)$] The (only) sequence in $B_{n+1,m}$ obtained by inserting a new weak
maximum at the end of the string of consecutive maxima of $x$.
\end{itemize}

Similarly, given a restricted growth function $r\in\RGF_{n,m}$,
by inserting a new rightmost entry $r_{n+1}=i$ one obtains
\begin{itemize}
\item[$(i)$] $m$ sequences in $\RGF_{n+1,m}$, one for each $i=1,2,\dots,m$;
\item[$(ii)$] One sequence in $\RGF_{n+1,m+1}$, when $i=m+1$ is a new
strict maximum.
\end{itemize}

Now, given a restricted growth function $r=r_1\cdots r_n$,
define $x=\phi(r)$ inductively as follows.

\begin{itemize}
\item Map the only restricted growth function of length one, $r=1$,
to the only modified ascent sequence of length one, $x=1$.
Note that the sum $\max(r)+\max(x)$ is equal to one plus the length
of $r$ (or $x$).
\item Let $n\ge 1$ and let $\phi(r_1\cdots r_n)=x_1\cdots x_n$
be defined inductively, with $\max(r)+\max(x)=n+1$.
Let $\bar{r}=r_1\cdots r_nr_{n+1}$. We define $\bar{x}=\phi(\bar{r})$
as follows, according to whether $r_{n+1}\le\max(r)$ or
$r_{n+1}=\max(r)+1$.
\begin{enumerate}
\item If $r_{n+1}=i\le\max(r)$, then we let $\bar{x}$ be obtained
from $x$ by inserting a new strict maximum $\max(x)+1$ in the
$i$th active spot of $x$. Note that $\max(r)=n-\max(x)+1$, thus
the number of possible choices for $i$ equals the number of active
sites of $x$.
\item If $r_{n+1}=\max(r)+1$, then we let $\bar{x}$ be obtained
from $x$ by inserting a new weak maximum (at the end of the string
of consecutive maxima).
\end{enumerate}
\end{itemize}
Note that in each case we have $\max(\bar{r})+\max(\bar{x})=n+2$.
In general, if $r\in\RGF_{n,j}$ and $x=\phi(r)$, then $x\in B_{n,n+1-j}$.
As a result, the map $\phi$ defined this way is a bijection
between $\RGF_{n,n-m+1}$ and $B_{n,m}$\footnote{
It is in fact an isomorphism between the generating trees of $\RGF$
and $B$ induced by the generating rules $(i)$ and $(ii)$.}.
We have thus obtained an alternative proof of the fact that the
statistic $|\asctops(x)|=\max(x)$ on $B_n$ is equidistributed with
the reverse of the number of blocks, i.e. $\max(r)$, on set partitions
of $[n]$.
Below we illustrate the step-by-step computation of
$\phi(123224135)=141233551$. Here superscripts denote
labels of active sites, while positions between consecutive elements
that have no superscript are forbidden. At each step, we underline
the newly added element in the modified ascent sequence (on the right).

\begin{align*}
1&\quad\loongmapsto{}\quad\underline{1}{^1}\\
12&\quad\loongmapsto{}\quad 1{^1}\underline{1}{^2}\\
123&\quad\loongmapsto{}\quad 1{^1}1{^2}\underline{1}{^3}\\
1232&\quad\loongmapsto{}\quad 1{^1}1\underline{2}{^2}1{^3}\\
12322&\quad\loongmapsto{}\quad 1{^1}12\underline{3}{^2}1{^3}\\
123224&\quad\loongmapsto{}\quad 1{^1}123{^2}\underline{3}{^3}1{^4}\\
1232241&\quad\loongmapsto{}\quad 1\underline{4}{^1}123{^2}3{^3}1{^4}\\
12322413&\quad\loongmapsto{}\quad 14{^1}123{^2}3\underline{5}{^3}1{^4}\\
r=123224135&\quad\loongmapsto{}\quad 14{^1}123{^2}35{^3}\underline{5}{^4}1{^5}=\phi(r).\\
\end{align*}

\subsection{Bijection via transposition of biwords}

In the previous section, two recursive constructions of $\Modasc(212)$
and $\RGF$ lead to the definition of the bijection $\phi$.
Here we slightly tweak the Burge transpose to obtain a new bijection $\psi$.
Compared to $\phi$, the construction of $\psi$ is more straightforward,
and arguably more elegant. Despite that, the proof that $\psi$ is
a bijection come with a considerable amount of technical details.
In light of that, and also due to the fact that what we show in this
section is not strictly necessary for the rest of the paper, most of
them will be omitted.

Let $x=x_1\cdots x_n$ be a modified ascent sequence and let $m=\max(x)$.
The definition of the restricted growth function $\psi(x)$ goes as follows.
Initially, we use a step-by-step procedure to label each entry $x_j$
of $x$ with a positive integer $u_j$.
For $i=1,\dots,m$, at the $i$th step we label all the copies of $i$ in $x$.

\begin{itemize}
\item For $i=1$, we label the copies of $1$ with increasing integers
$1,2,3,\dots$, starting from the leftmost copy and going from
left to right.
\item Let $i\ge 2$. Let $t$ be the maximum label assigned at the
previous steps. Let $x_j=i$ be the leftmost copy of $i$ in $x$.
Since $\asctops(x)=\nub(x)$, we have $x_{j-1}<x_j$. In particular, the
entry $x_{j-1}$ has been labeled with $u_{j-1}$ at a previous step of
the procedure. Then we let $u_j=u_{j-1}$ and assign labels $t+1,t+2,\dots$
to the remaining copies of $i$ (going from left to right).
\end{itemize}
Finally, we arrange $x$ and the resulting labels $u=u_1\cdots u_n$ in
the biword
$$
\binom{u}{x}=\binom{u_1\;u_2\;\cdots\;u_n}{x_1\;x_2\;\cdots\;x_n}.
$$

To sum up this procedure in the most succint way,
at the $i$th step we give to the leftmost copy of $i$
the same label as the entry immediately to its left,
and give new (increasing) labels to the other copies of $i$.
An example that illustrates this construction is the following.
Let $x=141233551$. The five steps needed to determine $u$ are
illustrated below, where labels defined at each step are underlined.
\begin{align*}
\text{Step 1:}\qquad &
\begin{pmatrix}
\underline{1}& &\underline{2}& & & & & &\underline{3}\\
1&4&1&2&3&3&5&5&1
\end{pmatrix}\\
\text{Step 2:}\qquad &
\begin{pmatrix}
1& &2&\underline{2}& & & & &3\\
1&4&1&2&3&3&5&5&1\\
\end{pmatrix}\\
\text{Step 3:}\qquad &
\begin{pmatrix}
1& &2&2&\underline{2}&\underline{4}& & &3\\
1&4&1&2&3&3&5&5&1\\
\end{pmatrix}\\
\text{Step 4:}\qquad &
\begin{pmatrix}
1&\underline{1}&2&2&2&4& & &3\\
1&4&1&2&3&3&5&5&1\\
\end{pmatrix}\\
\text{Step 5:}\qquad &
\begin{pmatrix}
1&1&2&2&2&4&\underline{4}&\underline{5}&3\\
1&4&1&2&3&3&5&5&1\\
\end{pmatrix}
\end{align*}
In the end, we obtain
$$
\setlength\arraycolsep{1.5pt}
\binom{u}{x}=
\begin{pmatrix}
1&1&2&2&2&4&4&5&3\\
1&4&1&2&3&3&5&5&1\\
\end{pmatrix}.
$$
Now, define the biword $(u,x)^{T'}$ by
\begin{itemize}
\item Flipping $\binom{u}{x}\mapsto\binom{x}{u}$;
\item Sorting the top row in weakly increasing order, breaking
ties by sorting the bottom entries in \emph{increasing} order.
\end{itemize}
The definition of $T'$ is analogous to the Burge transpose $T$, the only
difference being that in case of ties we sort the bottom entries
in increasing order. Biwords where columns are sorted this way
play a central role in the RSK correspondence, and are often called
\emph{generalized permutations}~\cite{K}.
For instance, the $T'$-transpose of the biword
$(u,x)=(112224453,141233551)$ is
$$
\setlength\arraycolsep{1.5pt}
\begin{pmatrix}
1&1&2&2&2&4&4&5&3\\
1&4&1&2&3&3&5&5&1
\end{pmatrix}
\loongmapsto{\text{flip}}
\begin{pmatrix}
1&4&1&2&3&3&5&5&1\\
1&1&2&2&2&4&4&5&3
\end{pmatrix}
\loongmapsto{\text{sort}}
\begin{pmatrix}
1&1&1&2&3&3&4&5&5\\
1&2&3&2&2&4&1&4&5
\end{pmatrix}.
$$
Finally, we let $\psi(x)$ be the bottom row of the biword $(u,x)^{T'}$.
In our example, we have obtained $\psi(x)=123224145$.
Note that the bijection $\phi$ defined in the previous section maps
the same restricted growth function $123224135$ to
the modified ascent sequence $141233551$, and not to $x$.
That is, $\psi\neq\phi^{-1}$.

Next we prove that $\psi(x)$ is a restricted growth function
and $\psi$ is a bijection from $\Modasc(212)$ to $\RGF$.
Let us prove that $\psi(x)\in\RGF$ first. Consider the decomposition
of $(u,x)^{T'}$ obtained by splitting the biword according to the value
of the top entry; more explicitly, we have
$$
\setlength\arraycolsep{1.75pt}
\setcounter{MaxMatrixCols}{30}
\binom{u}{x}^{T'}=
\left(\begin{array}{cccc|cccc|cccc|cc|cccc}
1 & 1 & \dots & 1
& 2   & 2        & \dots & 2
& 3   & 3        & \dots & 3
& \dots &
& m   & m        & \dots & m\\
t_1 & 2 & \dots & \ell_1
& t_2 & \ell_1+1 & \dots & \ell_2
& t_3 & \ell_2+1 & \dots & \ell_3
& \dots &
& t_m & \ell_{m-1}+1 & \dots & \ell_m
\end{array}\right),
$$
where $t_i$ is the label assigned under $\psi$ to the leftmost copy
of $i$ in $x$, $m=\max(x)$, and $\ell_1,\dots,\ell_m$ are
nonnegative integers.
To prove that $x\in\RGF$, it suffices to show that $t_1=1$ and
$t_i\le\ell_{i-1}$ for each $i\ge 2$. By definition of $\psi$, we
have $t_1=1$. Furthermore, if $j$ is the index of the leftmost copy
of $i$ in $x$, then $t_i$ is equal to the label $u_{j-1}$ of $x_{j-1}$.
For $i=2$, we have $x_{j-1}<x_j=2$, thus $x_{j-1}=1$ and
$t_2\in\lbrace 1,2,\dots,\ell_1\rbrace$, i.e. $t_2\le \ell_1$.
A completely analogous argument shows that $t_i\le\ell_{i-1}$ for
each $i$. As a result, $\psi(x)$ is a restricted growth function.

Finally, we show that $\psi$ is bijective by defining its inverse
map $\psi^{-1}$ from $\RGF$ to $\Modasc(212)$.
Let $r=r_1\cdots r_n\in\RGF$. For $i\in[n]$, we define the label $y_i$
of $r_i$ as hinted by the decomposition of $(u,x)^{T'}$ considered
above. That is, we let $y_1=1$ and, for $i\ge 2$,
$$
y_i=
\begin{cases}
y_{i-1} & \text{if $r_{i}=\max(r_1\cdots r_{i-1})+1$};\\
y_{i-1}+1 & \text{if $r_i\le \max(r_1\cdots r_i)$}.
\end{cases}
$$
In other words, scanning $r$ from left to right, we repeat the same
label if the bottom entry is a new integer in $r$; otherwise, we
use a new label (equal to one plus the label used before).
We will now describe an iterative procedure to arrange the columns
$(r_i,y_i)$ in a biword, and the desired modified ascent sequence
$x=\psi^{-1}(r)$ will be obtained as the bottom row of such biword.
For $j=1,2,\dots,\max(y)$, at the $j$th step of the procedure
we shall arrange all the columns where $y_i=j$ as a consecutive
block, sorting them in increasing order with respect to the
top entry~$r_i$. Their position in the biword will be determined
by the smallest top label of such entries;
more precisely, if the smallest top label of the entries $y_i=j$
is equal to~$\ell$, then we insert all such columns immediately
after the rightmost column in the current biword where the
top entry is equal to~$\ell$.
At the first step, $j=1$, we simply arrange all the columns with
bottom entry~$1$ (sorting them in increasing order with respect to
the top entry). The fact that~$r$ is a restricted growth function
guarantees that such~$\ell$ is well defined at each next step
of the procedure.
%

Instead of proving that the map defined this way is the inverse map
of $\psi$, we wish to better clarify this construction with a
concrete example.
Consider the restricted growth function $r=123224145$ obtained
previously as $r=\psi(x)$, for $x=141233551$.
Letting $y=y_1\cdots y_n$, we have
$$
\setlength\arraycolsep{2pt}
\setcounter{MaxMatrixCols}{20}
\begin{array}{llcccc|ccc|cccc|ccc|ccc}
r= && 1 & 2 & 3 &&& 2 &&& 2 & 4 &&& 1 &&& 4 & 5;\\
y= && 1 & 1 & 1 &&& 2 &&& 3 & 3 &&& 4 &&& 5 & 5.\\
\end{array}
$$
We arrange the columns $(r_i,y_i)$ in a biword following the
iterative procedure described above. At the $j$th step, the newly
inserted columns are highlighted and $\ell_j$ denotes the smallest
top label of the entries $y_i=j$.
\begin{center}
\renewcommand{\arraystretch}{2.5}
\begin{tabular}{llp{0.5\textwidth}}
Step 1: &
$\phantom{\ell_1=0\;\longrightarrow\;\;}
\displaystyle{\binom{\mathbf{123}}{\mathbf{111}}}$;\\
Step 2: & $\ell_2=2\;\longrightarrow\;
\displaystyle{\binom{12\;\mathbf{2}\;3}{11\;\mathbf{2}\;1}}$;
\\
Step 3: &
$\ell_3=2\;\longrightarrow\;
\displaystyle{\binom{122\;\mathbf{24}\;3}{112\;\mathbf{33}\;1}}$;\\
Step 4: &
$\ell_4=1\;\longrightarrow\;
\displaystyle{\binom{1\;\mathbf{1}\;22243}{1\;\mathbf{4}\;12331}}$;\\
Step 5: &
$\ell_5=4\;\longrightarrow\;
\displaystyle{\binom{112224\;\mathbf{45}\;3}{141233\;\mathbf{55}\;1}}$.\\
\end{tabular}
\end{center}
In the end, the bottom row of the resulting biword is
$\psi^{-1}(r)=141233551=x$, as expected.

\begin{remark}
The map $\psi$ is in fact defined on the set $\Modasc$ of all
modified ascent sequences.
Quite remarkably, a numerical investigation suggests that the
restriction of $\psi$ to $\Modasc(2213)$ yields a bijection to
$\RGF$ too. A proof of this fact remains to be found.
When $\psi$ is extended to $\Modasc$, the smallest example of a collision
is given by
$$
\psi(12132)=\psi(12213)=12132.
$$
This is the only collision for sequences of length five (and indeed
$|\RGF_5|=|\Modasc_5|-1$).
Note that $12132$ and $12213$ are the shortest modified ascent sequences
containing $212$ and $2213$, respectively.
On the other hand, for instance due to the same example of collision, the
restriction of $\psi$ to $\Modasc(2231)$ and $\Modasc(2321)$
is not bijective.
\end{remark}

\section{Patterns $2213$ and $2231$}\label{section_2213_2231}

In Section~\ref{section_212}, we showed a recursive construction
of $\Modasc(212)$ that embodies Equation~\eqref{eq_stirl}
defining the Stirling numbers of the second kind.
In this section, we settle Conjecture~\ref{conj_ds} for the patterns
$2213$ and $2231$.
Our methods are similar in that we will once again construct
$\Modasc(y)$ recursively, for $y\in\lbrace 2213,2231\rbrace$.
This construction, however, leads to a different equation
$$
g_{n+1}(k)=k\sum_{j=k}^n g_{n}(j),
$$
where $g_{n}(k)$ is the number of modified ascent sequences in
$\Modasc_n(y)$ that contain $k$ copies of $1$.
To prove that the Bell numbers satisfy the same equation, we find an
analogous construction for $\RGF$. A bijection between the two sets
is obtained as a byproduct.

Let us start from $\RGF$. Throughout this section, we denote by
$\RGF_n(k)$ the set of restricted growth functions whose length of
the maximal, strictly increasing prefix is equal to $k$.
That is, for $k\le n-1$ we let
$$
\RGF_n(k)=\lbrace r\in\RGF_n:\;r_1\cdots r_k=12\cdots k,\ r_{k+1}\le k\rbrace,
$$
while $\RGF_n(n)$ is the singleton containing the sequence $12\cdots n$.
In terms of set partitions, sequences in $\RGF_n(k)$ correspond to set
partitions of $[n]$ where exactly $k$ blocks contain their own index as
an element; equivalently, where $k$ is the maximum integer such that
$1,2,\dots,k$ are contained in distinct blocks.
Let $h_n(k)=|\RGF_n(k)|$. We show that $h_{n+1}(n+1)=1$ and,
for $k=1,\dots,n$,
\begin{equation}\label{eq_H}
h_{n+1}(k)=k\sum_{j=k}^{n}h_{n}(j).
\end{equation}
Indeed, we just noted that $\RGF_{n+1}(n+1)$ is a singleton.
Furthermore, let $k\in[n]$ and let $r\in \RGF_{n}(j)$, for some $j\ge k$.
Then we have
$$
r=r_1\cdots r_jr_{j+1}\cdots r_n
\quad\text{with}\;
r_1\cdots r_j=1\cdots j
\;\text{and}\;r_{j+1}\le j.
$$
For $i\in [k]$, let $r(k,i)$ be the sequence obtained from $r$
by inserting a new entry equal to $i$ between $r_k$ and $r_{k+1}$;
that is, let
$$
\begin{matrix}
r(k,i) &=& r_1\cdots r_k & i & r_{k+1}\cdots r_j & r_{j+1}\cdots r_n\\
 &=& 1\cdots k     & i & k+1 \cdots j & r_{j+1}\cdots r_n.
\end{matrix}
$$
Clearly, $r(k,i)\in\RGF_{n+1}(k)$. On the other hand,
let $y\in\RGF_{n+1}(k)$, with $k<n+1$. Then $y=r(k,i)$,
where $i=y_{k+1}$ and $r$ is obtained by removing $y_{k+1}$
from $y$. Note that $r\in\RGF_n(j)$, for some $j\ge k$.
As a result, for $k<n+1$, we have the bijection
\begin{align*}
[k]\times\bigcup_{j=k}^{n}\RGF_n(j)&\longrightarrow \RGF_{n+1}(k)\\
(i,r)&\longmapsto r(k,i)
\end{align*}
and thus
$$
k\sum_{j=k}^n h_n(j)=h_{n+1}(k).
$$
The triangle $h_n(k)$ is recorded as A259691 in the OEIS~\cite{Sl}.
To the best of our knowledge, the combinatorial interpretation given
here appears to be new.

Next we turn our attention to $\Modasc(2213)$. For $n\ge 1$ and $k\in [n]$,
let
$$
G_n(k)=
\lbrace x\in\Modasc_n(2213):\;\text{$x$ contains $k$ copies of $1$}\rbrace
$$
and let $g_n(k)=|G_n(k)|$. Note that $g_{n}(n)=1$ since
the only sequence in $G_{n}(n)$ is $x=1\cdots 1$.
For $k<n+1$, we mimic what we did before and define a bijection
\begin{align*}
[k]\times\bigcup_{j=k}^{n}G_n(j)&\longrightarrow G_{n+1}(k)\\
(i,x)&\longmapsto x(k,i),
\end{align*}
proving that the coefficients $g_{n+1}(k)$ satisfy the same recurrence
as $h_{n+1}(k)$. Since the initial conditions are the same,
we have that $g_{n}(k)=h_{n}(k)$.

Let $x\in G_n(j)$, for some $j\ge k$, and let $i\in[k]$.
Define the sequence $x(k,i)$ as follows:

\begin{enumerate}
\item[(1)] Increase by one all the entries of $x$, except for the
$k$ leftmost copies of $1$.
\item[(2)] Insert a new entry equal to $2$ immediately to the right of
the $i$th copy of~$1$ (counting from left to right).
\item[(3)] In the special case where $i<k$ and the $k$th and $(k+1)$th
copies of $1$ are consecutive in position, move to the end of the sequence
the maximal string of consecutive $1$s that end with the $k$th copy.
\end{enumerate}

Next we show in Proposition~\ref{prop_2213} that $x(k,i)\in G_{n+1}(k)$.
Roughly speaking, the reason is that the map
$(i,x)\mapsto x(k,i)$ preserves the equality $\asctops(x)=\nub(x)$ and
does not create any new occurrence of $2213$.
The role of~(3) is to address the case where the $(k+1)$th copy of $1$
would become an ascent top that is not a leftmost copy.
The proof consists of a case by case analysis that requires a certain
amount of technicalities, some of which will be omitted for the sake
of readability.
The shortest example where~(3) plays a role is given by $x=1112\in G_4(3)$.
In this case, for $k=2$ and $i=1$, we have
$$
1\;1\;1\;2\;\loongmapsto{(1)}\;
1\;1\;2\;3\;\loongmapsto{(2)}\;
1\;\underline{2}\;\boxed{1}\;2\;3\;\loongmapsto{(3)}\;
1\;\underline{2}\;2\;3\;\boxed{1}\;=x(2,1),
$$
where the underlined entry is the newly inserted $2$.
Note that, without moving the boxed $1$ (i.e. the maximal string of
consecutive $1$s that end with the $k$th copy of $1$) at the end of
the sequence, the $(k+1)$th copy of $1$ would be an ascent top
in $x(2,1)$, but not a leftmost copy; consequently, the resulting
sequence $12123$ would not be a modified ascent sequence.
Two more instances of this construction will be illustrated later
in Example~\ref{example_2213} and Example~\ref{example_2213_b}.

\begin{proposition}\label{prop_2213}
Let $x\in G_n(j)$, with $k\le j \le n$, and let $i\in[k]$.
Then $x(k,i)\in G_{n+1}(k)$.
\end{proposition}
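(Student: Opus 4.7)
The proposition requires showing that $x(k,i)$ has length $n+1$, lies in $\Modasc$, avoids $2213$, and has exactly $k$ copies of $1$. The length and the count of $1$s are immediate from the construction, since step~(1) leaves exactly $k$ ones, step~(2) inserts one $2$, and step~(3) merely rearranges entries.

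To prove $x(k,i) \in \Modasc$, my plan is to introduce intermediate sequences $x'$ and $x''$ after steps~(1) and~(2), and track $\asctops$ and $\nub$ step by step, invoking Proposition~\ref{modasc_char} at the end. A pair-by-pair analysis shows that step~(1) creates a new ascent top only at the position of the $(k+1)$th copy of $1$ in $x$, and only if that position is adjacent to the $k$th copy; simultaneously, the leftmost copy of $2$ in $x'$ moves to the same position (when $j > k$). Consequently $x'$ satisfies $\asctops = \nub$ in the adjacent subcase but not in the non-adjacent one. Step~(2) inserts a new leftmost $2$ that is also an ascent top, and one checks it automatically repairs the non-adjacent failure. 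Conversely, in the adjacent subcase with $i < k$ and $j > k$, step~(2) strips the leftmost-copy status from the former leftmost $2$ without removing its ascent-top status, producing a new failure. This is precisely when step~(3) is triggered: moving the maximal run of $1$s ending at the $k$th copy to the tail kills the problematic ascent at the former $(k+1)$th copy, while the relocated $1$s at the tail are neither leftmost copies nor ascent tops. Careful bookkeeping of the left boundary of the gap and of the tail closes the argument.

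For $2213$-avoidance, I would reason by contrapositive. A value/position analysis shows that the inserted $2$ is always the leftmost $2$ in $x(k,i)$, and that any $2$ to its right is a promoted $1$, beyond which all original $1$s in $x$ have also been promoted. Consequently the inserted $2$ cannot play any of the four roles $a, b, c, d$ in a $2213$-pattern $abcd$ in $x(k,i)$: not $c$ or $d$ by value, not $b$ by leftmostness, and not $a$ because no non-promoted $1$ lies after any promoted $2$ to serve as $c$. Similarly, the $1$s moved to the tail can only play the role of $c$, and any interior copy of $1$ can replace them. After these reductions, the residual occurrence sits in positions unaffected by steps~(2) and~(3) and pulls back through step~(1) to a $2213$-occurrence in $x$, contradicting $x \in G_n(j) \subseteq \Modasc_n(2213)$.

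The main technical obstacle is the case analysis around step~(3): verifying that moving a potentially long block of consecutive $1$s preserves $\asctops = \nub$ at both boundaries of the gap and at the tail, and that the move does not create a $2213$-pattern straddling these boundaries. The degenerate cases $i = k$, $j = k$, and the non-adjacent subcase each require separate checks. As the authors themselves note, some of these technicalities are omitted for readability, which indicates that this case analysis is the genuinely delicate part of the argument.
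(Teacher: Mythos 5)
Your proposal is correct and follows essentially the same route as the paper: verify the count of $1$s directly, track the equality $\asctops=\nub$ through the modification by isolating $\bar{1}^{(k+1)}$ as the only entry whose ascent-top/leftmost-copy status can change (with the adjacent subcase and rule~(3) handled separately), and rule out new occurrences of $2213$ by observing that such an occurrence would force two copies of $2$ before a copy of $1$, which the placement of the inserted $2$, the promoted $1$s, and the relocated tail makes impossible. The only difference is organizational (your intermediate sequences $x'$, $x''$ versus the paper's direct block-decomposition comparison of $x$ and $y$), and both treatments leave the same step-(3) bookkeeping as omitted routine detail.
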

\begin{proof}
Let $y=x(k,i)$. By definition, $y$ contains $k$ copies of $1$.
We have to show that $y\in\Modasc$ and $y$ avoids $2213$.
We distinguish two cases depending on whether or not
the definition of $y=x(k,i)$ falls under the special case $(3)$.

\begin{itemize}
\item Suppose that we are not in the special case $(3)$; that is,
$y$ is obtained from $x$ by increasing by one all its entries,
except for the leftmost $k$ copies of $1$, and inserting a $2$
immediately after the $i$th copy of $1$.
More explicitly, in this case we have
$$
\renewcommand*{\arraystretch}{1.5}
\begin{matrix}
x= & 1^{(1)}B_1 & \cdots & 1^{(i)}B_i & \cdots &
1^{(k)}B_k\ 1^{(k+1)}B_{k+1} & \cdots & 1^{(j)}B_j,\\
y= & 1^{(1)}\bar{B}_1 & \cdots & 1^{(i)}2\bar{B}_i & \cdots &
1^{(k)}\bar{B}_k\ \bar{1}^{(k+1)}\bar{B}_{k+1} & \cdots & \bar{1}^{(j)}\bar{B}_j,
\end{matrix}
$$
where, for $\ell=1,\dots,j$, $1^{(\ell)}$ denotes the $\ell$th copy of $1$,
$B_{\ell}$ denotes the block of entries between $1^{(\ell)}$ and $1^{(\ell+1)}$,
and a bar marks entries and blocks that are increased by one in $y$.
We prove that $y\in\Modasc$ by showing that $\asctops(y)=\nub(y)$.
Note that $\asctops(x)=\nub(x)$ since $x\in\Modasc$.
The relative order of the entries of $y$ is almost the same as in $x$,
with the exception of the newly inserted~$2$ and the increased copies
of $1$. In particular, the newly inserted $2$ is both an ascent top
and a leftmost copy in $y$.
Furthermore, if the block $B_i$ is not empty, then its leftmost element
is an ascent top both in $x$ and in $y$, since all the elements in
$\bar{B}_i$ are greater than $2$. Similarly, it belongs to $\nub(x)$,
since $\nub(x)=\asctops(x)$, and to $\nub(y)$ as well,
once again due to the fact that the relative order of the entries
of $y$ that are greater than $2$ is the same as it was in $x$.
The case when $B_i$ is empty can be addressed similarly.
Finally, the only other entry that could potentially become a new ascent
top in $y$ is $\bar{1}^{(k+1)}$. This happens if and only if $B_k$ is empty,
that is, if $1^{(k)}$ and $1^{(k+1)}$ are consecutive in positions.
Now, if $i=k$, then the entries
$$
1^{(k)}1^{(k+1)}
\quad\text{are mapped to}\quad
1^{(k)}2\bar{1}^{(k+1)}=122
$$
and once again the newly inserted $2$ belongs to both $\asctops(y)$ and
$\nub(y)$. If instead $i<k$, then we fall under the special case $(3)$,
which we consider below.
As a result of the above discussion, the equality $\asctops(x)=\nub(x)$
is preserved by the construction $(i,x)\mapsto y$; hence $\asctops(y)=\nub(y)$
and $y\in\Modasc_{n+1}$, as wanted.\\
Let us show that $y$ avoids $2213$ next.
Note that $x$ avoids $2213$ by our assumptions.
Since the relative order of elements of $y$ that are greater than $2$
is the same as it was in $x$, an eventual occurrence
$y_{\ell_1}y_{\ell_2}y_{\ell_3}y_{\ell_4}$ of $2213$ in $y$
should satisfy $y_{\ell_3}=1$ and $y_{\ell_1}=y_{\ell_2}=2$;
otherwise, we would have $y_{\ell_4}\ge y_{\ell_1}=y_{\ell_2}\ge 3$
and the same four entries would form an occurrence of $2213$ in $x$,
which is impossible. On the other hand, there is only one entry equal to $2$
that precedes the rightmost copy of $1$ in $y$, and thus the case
$y_{\ell_3}=1$ and $y_{\ell_1}=y_{\ell_2}=2$ is impossible too.

\item Let us now take care of the special case (3) where $i<k$ and
$1^{(k)}$ and $1^{(k+1)}$ are consecutive in position. Let us write
$$
\begin{matrix}
x=& 1^{(1)}B_1 & \cdots & 1^{(i)}B_i & \cdots &
\boxed{1^{(\ell)}1^{(\ell+1)}\cdots 1^{(k)}} &
1^{(k+1)}B_{k+1} & \cdots & 1^{(j)}B_j,
\end{matrix}
$$
where the box contains the maximal string of consecutive $1$s
that end with $1^{(k)}$ (for some $\ell\ge 1$) and the rest is defined as
in the previous case. Then
$$
\renewcommand*{\arraystretch}{1.5}
\begin{matrix}
x=& 1^{(1)}B_1 & \cdots & 1^{(i)}B_i & \cdots
& \boxed{1^{(\ell)}\cdots 1^{(k)}}
& 1^{(k+1)}B_{k+1} \cdots 1^{(j)}B_j,\\
y=& 1^{(1)}\bar{B}_1 & \cdots & 1^{(i)}2\bar{B}_i & \cdots
& \bar{1}^{(k+1)}\bar{B}_{k+1}\cdots\bar{1}^{(j)}\bar{B}_j
& \boxed{1^{(\ell)}\cdots 1^{(k)}}\;,
\end{matrix}
$$
where in $y$ the boxed $1$s have been moved at the end under the effect
of $(3)$. It is now easy to verify that, similarly to the previous case,
the equality $\asctops(y)=\nub(y)$ holds, and we omit the details.
We just note that, by moving the boxed $1$s at the end of the sequence,
we have removed the newly created ascent $1^{(k)}\bar{1}^{(k+1)}$.
Otherwise, the entry $\bar{1}^{(k+1)}$ would be an ascent top, but not a
leftmost copy since $\bar{1}^{(k+1)}=2$ and we inserted a new entry equal
to $2$ immediately after $1^{(i)}$, with $i<k$.
The proof that $y$ avoids $2213$ is identical to the previous case, with the
addition that moving the boxed $1$s at the end of the sequence cannot create
an occurrence of $2213$.
\end{itemize}
\end{proof}

To see that the map $(i,x)\mapsto x(k,i)$ is a bijection from
$[k]\times \bigcup_{j=k}^n G_n(j)$ to $G_{n+1}(k)$, let us
define its inverse map.
Let $y=y_1\cdots y_{n+1}\in G_{n+1}(k)$. We determine $i,j,k$ and $x$
such that $x\in G_n(j)$ and $y=x(k,i)$. Let
\begin{align*}
i&=|\lbrace \ell:\ y_{\ell}=1\text{ and } y_{s}\neq 2\ \forall s<\ell\rbrace|,\\
j&=|\lbrace \ell: y_{\ell}\le 2\rbrace|-1,\\
k&=|\lbrace \ell: y_{\ell}=1\rbrace\rbrace|;
\end{align*}
that is, $i$ is equal to the number of $1$s preceding the leftmost $2$;
$j$ is equal to the number of copies of $1$ and $2$, minus one;
and $k$ is equal to number of copies of $1$s.
Let $x$ be the string obtained from $y$ as follows:
\begin{itemize}
\item[$(a)$] Remove the leftmost copy of $2$; note that the leftmost
copy of $2$ is an ascent top preceded by the $i$th copy of $1$.
\item[$(b)$] In the special case where $y_{n+1}=1$ and there is at least
one copy of $2$ left (after $(a)$ has been applied),
move the maximal string of consecutive $1$s containing $y_{n+1}$
immediately before the leftmost copy of $2$.
\item[$(c)$] Decrease by one each entry that is strictly greater than $1$.
\end{itemize}
Clearly, steps $(a)$, $(b)$ and $(c)$ mirror, respectively, steps $(2)$,
$(3)$ and $(1)$ in the definition of the map $(x,i)\mapsto x(k,i)$.
Note that $(b)$ is applied if and only $x\mapsto x(k,i)=y$ falls under
the special case $(3)$. Indeed, if $(3)$ is not applied, then in $y$ all
the copies of $1$ precede the second copy of $2$, i.e. $\bar{1}^{(k+1)}$.
Thus, when the leftmost copy of $2$ is removed from $y$ at step $(a)$,
the last entry $y_{n+1}=1$ is preceded by another copy of $2$ if and only
if the final string of consecutive $1$s was moved under the effect of $(3)$.
In the end, we have $y=x(k,i)$, and the map defined above is the
inverse map of $(x,i)\mapsto x(k,i)$.

Two examples of this construction are illustrated below.

\begin{example}\label{example_2213}
Let $y=12613224532$. Note that $y\in G_{11}(2)$. Here we have
$$
i=1,\quad
j=5,\quad
k=2
$$
and $x$ is obtained as
$$
y=1\underline{2}613224532\;\loongmapsto{$(a)$}\;
1613224532\;\loongmapsto{$(c)$}\;
1512113421=x,
$$
where the leftmost copy of $2$ in $y$ is underlined.
Note that $(b)$ is not applied since $y$ does not end
with $1$. It is now easy to check that $y=x(2,1)$:
$$
x=1512113421\;\loongmapsto{(1)}\;
1613224532\;\loongmapsto{(2)}\;
1\underline{2}613224532=y.
$$
\end{example}

\begin{example}\label{example_2213_b}
Let $y=131551242111$. We have $y\in G_{12}(6)$ and
$$
i=3,\quad
j=7,\quad
k=6.
$$
Here we apply $(a)$, $(b)$ and $(c)$ to obtain
\begin{align*}
y=131551\underline{2}42111\;
&\loongmapsto{$(a)$}\;13155142\;\boxed{111}\\
&\loongmapsto{$(b)$}\;1315514\;\boxed{111}\;2\\
&\loongmapsto{$(c)$}\;12144131111=x.
\end{align*}
In this case, $(c)$ is applied since $y$ ends with $1$ and there is at
least one copy of $2$ in the sequence resulting from $(b)$.
Finally, we have $y=x(6,3)$:
\begin{align*}
x=12144131111\;
&\loongmapsto{(1)}\;13155141112\\
&\loongmapsto{(2)}\;131551\underline{2}4\;\boxed{111}\;2\\
&\loongmapsto{(3)}\;131551242\;\boxed{111}\;=y.
\end{align*}
\end{example}

In the previous part of this section, we have proved
Equation~\eqref{eq_H} by showing a recursive construction
where each object of $\RGF_n(j)$, $j=k,k+1,\dots,n$, gives rise to~$k$ objects in $\RGF_{n+1}(k)$. In a similar fashion,
we described a recursive construction of $G_n(k)=\lbrace x\in\Modasc_n(2213):\text{$x$ contains $k$ copies of $1$}\rbrace$
which leads to the analogous equation
$$
g_{n+1}(k)=k\sum_{j=k}^n g_{n}(j),
$$
where $g_{n}(j)=|G_{n}(j)|$. The next corollary follows
immediately from these two results.

\begin{corollary}
For $n\ge 0$, the number of $2213$-avoiding modified ascent sequences
of length $n$ is equal to the $n$th Bell number.
\end{corollary}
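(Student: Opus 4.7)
The plan is to read off the corollary as a direct comparison of the two recurrences just established, with essentially no further combinatorial work required. Both of the difficult steps have been done above: the recursive construction of $\RGF_n(k)$ yields
$$
h_{n+1}(k) = k \sum_{j=k}^{n} h_n(j) \quad\text{for } k \leq n, \qquad h_n(n) = 1,
$$
and the parallel construction $(i,x)\mapsto x(k,i)$ on $2213$-avoiders, validated by Proposition~\ref{prop_2213} together with the description of its inverse, gives
$$
g_{n+1}(k) = k \sum_{j=k}^{n} g_n(j) \quad\text{for } k \leq n, \qquad g_n(n) = 1,
$$
the boundary value coming from the unique sequence $1\cdots 1$.

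The first step is to confirm the two triangles agree. Since both satisfy the same recurrence and the same boundary conditions, a routine induction on $n$ yields $g_n(k) = h_n(k)$ for every $n \geq 1$ and every $k \in [n]$. One only needs to check the base case $n=1$ (both sides equal $1$) and then apply the recurrence term by term.

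The second step is to sum over $k$. Recall from Section~\ref{sec_prel} that $|\RGF_n|$ equals the $n$th Bell number $B_n$. Since the sets $\RGF_n(k)$ partition $\RGF_n$ according to the length of the maximal strictly increasing prefix, and since every $x \in \Modasc_n(2213)$ contains at least one copy of $1$ (so the sets $G_n(k)$ partition $\Modasc_n(2213)$ according to the number of $1$s), we obtain
$$
|\Modasc_n(2213)| = \sum_{k=1}^{n} g_n(k) = \sum_{k=1}^{n} h_n(k) = |\RGF_n| = B_n.
$$
The case $n = 0$ is immediate from the empty sequence.

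There is no real obstacle left at this stage; the potential difficulties — verifying that $(i,x)\mapsto x(k,i)$ lands in $\Modasc_{n+1}(2213) \cap G_{n+1}(k)$, and writing down a well-defined inverse (in particular handling the delicate special case~(3) where the trailing block of $1$s must be relocated) — have already been absorbed into Proposition~\ref{prop_2213}. As a small bonus, the identification $g_n(k) = h_n(k)$ makes the map $(i,x)\mapsto x(k,i)$ an explicit bijective correspondence with $\RGF$ refined by the prefix statistic, which is what the introduction to the section advertised.
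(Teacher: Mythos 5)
Your proof is correct and follows essentially the same route as the paper: it identifies the triangles $g_n(k)$ and $h_n(k)$ via the common recurrence and boundary conditions established earlier in the section, then sums over $k$ using the fact that the $G_n(k)$ partition $\Modasc_n(2213)$ and the $\RGF_n(k)$ partition $\RGF_n$. The only difference is that you spell out the induction showing $g_n(k)=h_n(k)$, which the paper leaves implicit.
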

\begin{proof}
Let $k\in [n]$. Due to what proved so far in this section, the
number of restricted growth functions in $\RGF_n(k)$ equals the number
of $2213$-avoiding modified ascent sequences of length $n$ that
contain $k$ copies of $1$.
By summing over $k$, we obtain
$$
|\Modasc_n(2213)|=\sum_{k=1}^n |G_n(k)|=
\sum_{k=1}^n |\RGF_n(k)|=|\RGF_n|,
$$
which is equal to the $n$th Bell number.
\end{proof}

Two methods to construct inductively $\RGF$ and $\Modasc(2213)$
are provided, respectively, by the maps
$$
r\mapsto r(k,i)
\quand
x\mapsto x(k,i).
$$
A bijection $\phi$ from $\RGF_n$ to $\Modasc_n(2213)$ is obtained
accordingly by letting
$$
\phi(12\cdots n)=11\cdots 1
\quand
\phi\bigl(r(k,i)\bigr)=\phi(r)(k,i).
$$
In other words, we let $\phi$ map the only sequence $r=12\cdots n$
in $\RGF_{n}(n)$ to the only sequence $x=11\cdots 1$ in $G_n(n)$;
and, if $\phi(r)=x$ is defined inductively and $s=r(k,i)$, then we
let $\phi(s)=x(k,i)$.
Note that $\max(12\cdots n)=n+1-\max(11\cdots 1)$; and, if we assume
that $\max(r)=n+1-\max(x)$, then
$$
\max\bigl(r(k,i)\bigr)=\max(r)=n+1-\max(x)=n+2-\max\bigl(x(k,i)\bigr),
$$
where we used the fact that
$$
\max\bigl(r(k,i)\bigr)=\max(r)
\quand
\max\bigl(x(k,i)\bigr)=\max(x)+1.
$$
Therefore, for each $r\in\RGF_n$, we have
$$
\max(r)+\max\bigl(\phi(r)\bigr)=n+1.
$$
As observed at the end of Section~\ref{section_212} for the pattern $212$,
a consequence of this fact is that the distribution of
$|\asctops(x)|=\max(x)$ on $\Modasc(2213)$ is given by the reverse of the
Stirling numbers of the second kind. We have thus settled Conjecture~\ref{conj_ds}
for the pattern $2213$.

The pattern $2231$ can be solved by slightly tweaking the argument used
for $2213$. The same recursive construction works, except for the fact
that the special rule $(3)$ in the definition of the map
$x\mapsto x(k,i)$ must be replaced with:
\begin{itemize}
\item[$(3')$] In the special case where $i<k$ and the $k$th and $(k+1)$th
copies of $1$ are in consecutive positions in $x$, move
the maximal string of consecutive $1$s that end with the $k$th copy
\emph{immediately after the maximal string of consecutive $2$s that starts
with $\bar{1}^{(k+1)}$}.
\end{itemize}

Recall that by applying $(3)$ to the sequence $1112$ we obtained
$$
1\;1\;1\;2\;\loongmapsto{(1)}\;
1\;1\;2\;3\;\loongmapsto{(2)}\;
1\;\underline{2}\;\boxed{1}\;2\;3\;\loongmapsto{(3)}\;
1\;\underline{2}\;2\;3\;\boxed{1},
$$
which avoids $2213$ (but contains $2231$).
If instead we apply $(3')$, we obtain
$$
1\;1\;1\;2\;\loongmapsto{(1)}\;
1\;1\;2\;3\;\loongmapsto{(2)}\;
1\;\underline{2}\;\boxed{1}\;2\;3\;\loongmapsto{(3)}\;
1\;\underline{2}\;2\;\boxed{1}\;3,
$$
which avoids $2231$ (but contains $2213$).
Roughly speaking, both the constructions used for $2231$ and $2213$
preserve the equality $\asctops(x)=\nub(x)$.
The special rules $(3)$ and $(3')$ are necessary
in order to address the case where $1^{(k)}$ and $1^{(k+1)}$ are
in consecutive positions, and this would result in a strict ascent
$1^{(k)}\bar{1}^{(k+1)}$ where $\bar{1}^{(k+1)}$ is not a leftmost copy.
More specifically, this is fixed by moving the box of consecutive $1$s
ending with $1^{(k)}$ somewhere else (to the right) in the sequence.
In order to avoid $2213$, we move the box as far as possible at the
end of the sequence; if instead we want to avoid $2231$, we move the box
as close as possible to the original spot.

\begin{theorem}\label{proof_2213_2231}
Let $y\in\lbrace 2213,2231\rbrace$. Then $|\Modasc_n(y)|$
is equal to the $n$th Bell number. Furthermore, the number of
modified ascent sequences in $\Modasc_n(y)$ whose maximum value
is equal to $m$ is given by the $(n,n-m+1)$th Stirling
number of the second kind.
\end{theorem}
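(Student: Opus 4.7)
The plan is to treat the two patterns separately. The case $y=2213$ follows immediately from the material preceding the theorem: the corollary just above establishes that $|\Modasc_n(2213)|$ equals the $n$th Bell number, and the recursive bijection $\phi\colon\RGF_n\to\Modasc_n(2213)$ constructed there satisfies $\max(r)+\max(\phi(r))=n+1$. Since the number of $r\in\RGF_n$ with $\max(r)=n-m+1$ is the Stirling number $S(n,n-m+1)$, the distribution statement follows at once for $y=2213$.

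For $y=2231$, the plan is to mirror the entire construction used for $y=2213$, replacing the special rule $(3)$ by $(3')$ as described in the paragraph preceding the theorem. Set
$$
G'_n(k)=\lbrace x\in\Modasc_n(2231):\;x\text{ contains }k\text{ copies of }1\rbrace
$$
and define $(i,x)\mapsto x(k,i)$ by the steps $(1)$, $(2)$, $(3')$. The goal is to establish the analogue of Proposition~\ref{prop_2213}: for $x\in G'_n(j)$ with $k\le j\le n$ and $i\in[k]$, the image $x(k,i)$ lies in $G'_{n+1}(k)$. If this is done, a straightforward inverse map mirroring $(a),(b),(c)$ --- with $(b)$ replaced by the rule that the relocated block of $1$s sits immediately to the right of the maximal run of $2$s (rather than at the end of the sequence) --- yields a bijection
$$
[k]\times\bigcup_{j=k}^n G'_n(j)\longrightarrow G'_{n+1}(k),
$$
hence the recurrence $g'_{n+1}(k)=k\sum_{j=k}^n g'_n(j)$, which matches the one for $\RGF_n(k)$ with the same initial conditions. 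This gives $|\Modasc_n(2231)|$ equal to the $n$th Bell number.

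The main obstacle is verifying the analogue of Proposition~\ref{prop_2213}, and the argument splits along the same two cases. In the non-special case the rules $(3)$ and $(3')$ are never invoked, so the verification is literally the one already given for Proposition~\ref{prop_2213} (any potential occurrence of $2231$ in the image would use four entries of value $\ge 3$ and would translate back to an occurrence of $2231$ in $x$). In the special case, both $\asctops(y)=\nub(y)$ and the avoidance of $2231$ must be re-established. For the first, the key point is that relocating the block of consecutive $1$s from its original position to immediately after the maximal run of $2$s starting at $\bar 1^{(k+1)}$ destroys the problematic ascent $1^{(k)}\bar 1^{(k+1)}$ without creating any new ascent top that fails to be a leftmost copy. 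For the second, the crucial observation is that no entry of value $\ge 3$ can lie between the inserted $2$ at position $i$ and the relocated block of $1$s: all entries of value $\ge 3$ either precede the inserted $2$ or follow the maximal run of $2$s, but the relocated $1$s are placed immediately after that run. Consequently the relocated $1$s cannot serve as the final $1$ of any $2231$ pattern whose leading $22$ uses the inserted $2$, and occurrences using only entries already present in $x$ are impossible by assumption.

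Finally, the distribution statement for $y=2231$ follows as for $y=2213$. Define $\phi'\colon\RGF_n\to\Modasc_n(2231)$ recursively by $\phi'(12\cdots n)=11\cdots 1$ and $\phi'(r(k,i))=\phi'(r)(k,i)$; since the $(3')$-variant of the construction preserves the identity $\max(x(k,i))=\max(x)+1$ (the newly inserted $2$ always becomes a new ascent top, and no strict maximum is ever created or destroyed by the relocation), an easy induction yields $\max(r)+\max(\phi'(r))=n+1$ for every $r\in\RGF_n$, which transfers the Stirling distribution from $\RGF_n$ to $\Modasc_n(2231)$.
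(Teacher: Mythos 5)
Your overall route is the same as the paper's: the $2213$ case is read off from the preceding corollary together with the identity $\max(r)+\max(\phi(r))=n+1$, and the $2231$ case is handled by rerunning the $G_n(k)$ recursion with rule $(3)$ replaced by $(3')$, building the inverse map, and transferring the Stirling distribution; the paper itself supplies no more detail for $2231$ than you do. So the architecture is fine. However, one of the justifications you give for $2231$-avoidance in the special case is false as stated. You claim that no entry of value at least $3$ can lie between the inserted $2$ and the relocated block of $1$s. Take $x=1211\in\Modasc_4(2231)$ with $j=3$, $k=2$, $i=1$: steps $(1)$, $(2)$, $(3')$ give
$$
1\,2\,1\,1\;\longmapsto\;1\,3\,1\,2\;\longmapsto\;1\,2\,3\,1\,2\;\longmapsto\;1\,2\,3\,2\,1,
$$
and the letter $3$ sits strictly between the inserted $2$ (second position) and the relocated $1$ (last position). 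In general every nonempty block $\bar{B}_i,\dots,\bar{B}_{\ell-1}$ contributes such letters.

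The conclusion you want is still true, but for a different reason, and you should argue it this way: if a relocated $1$ were the final letter of an occurrence of $2231$ in $y=x(k,i)$, the letter playing the role of $3$ would have value at least $3$ (the letters separating the equal pair from the relocated block that have value $2$ all lie in the run of $2$s immediately preceding the block, so they cannot serve), hence it lies in one of $\bar{B}_1,\dots,\bar{B}_{\ell-1}$; the equal pair playing the role of $22$ must precede it, and the only value-$2$ letter preceding those blocks is the single inserted $2$, which cannot form an equal pair by itself, so the pair also has value at least $3$. All four letters then pull back to an occurrence of $2231$ in $x$ (the relocated $1$ pulls back to a $1$ in the box, which in $x$ follows $B_1,\dots,B_{\ell-1}$), a contradiction. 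With this repair, and the remaining routine checks that the relocation creates no new ascent top failing to be a leftmost copy, your proposal matches the paper's proof.
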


\section{Fishburn permutations}\label{section_fish_perms}

Recall from Section~\ref{section_intro_fish} the 
bijection $\gamma:\Modasc\to\F$ defined by
$$
\begin{pmatrix}
\identity\\
x
\end{pmatrix}^T=
\begin{pmatrix}
\sort(x)\\
\gamma(x)
\end{pmatrix},
$$
where $\identity$ is the identity permutation, $x\in\Modasc$,
$\sort(x)$ is obtained by sorting the entries of $x$ in weakly
increasing order, and $T$ denotes the Burge transpose.
Recall also that the biwords $(\identity,x)$ and
$\bigl(\sort(x),\gamma(x)\bigr)$ are Burge words; that is, the
descent set of the top row is a subset of the descent set of the
bottom row, and this property is preserved by $T$.

In this section,  we use bivincular patterns to characterize the set
$\gamma\bigl(\Modasc(y)\bigr)$ of Fishburn permutations
corresponding to $\Modasc(y)$, for $y\in\lbrace 212,2213,2321\rbrace$.
Namely, we show that
$$
\gamma\bigl(\Modasc(212)\bigr)=\F(\alpha),\quad
\gamma\bigl(\Modasc(2213)\bigr)=\F(\beta_1,\beta_2),\quad
\gamma\bigl(\Modasc(2321)\bigr)=\F(\delta_1,\delta_2),
$$
where the bivincular patterns $\alpha$, $\beta_1$, $\beta_2$,
$\delta_1$, $\delta_2$ are depicted---as mesh patterns---in Figure~\ref{figure_patterns}.
As usual, we shall assume the same picture as their definition.
For instance, a more extensive definition of $\alpha$ would be
$$
\alpha=\bigl(2413,
\{(2,k): k=0,\dots,4\}\cup
\{(k,3): k=0,\dots,4\}\bigr).
$$
We could not find a suitable description of
$\gamma\bigl(\Modasc(2231)\bigr)$ in terms of pattern avoidance.

\begin{figure}
\centering
\begin{tabular}{c}
$\alpha\;=\;\begin{tikzpicture}[scale=0.40, baseline=20pt]
\fill[NE-lines] (2,0) rectangle (3,5);
\fill[NE-lines] (0,3) rectangle (5,4);
\draw [semithick] (0.001,0.001) grid (4.999,4.999);
\filldraw (1,2) circle (6pt);
\filldraw (2,4) circle (6pt);
\filldraw (3,1) circle (6pt);
\filldraw (4,3) circle (6pt);
\end{tikzpicture}$
\qquad
$\beta_1\;=\;\begin{tikzpicture}[scale=0.40, baseline=20pt]
\fill[NE-lines] (0,2) rectangle (6,3);
\draw [semithick] (0.001,0.001) grid (5.999,5.999);
\filldraw (1,4) circle (6pt);
\filldraw (2,1) circle (6pt);
\filldraw (3,3) circle (6pt);
\filldraw (4,2) circle (6pt);
\filldraw (5,5) circle (6pt);
\end{tikzpicture}$
\qquad
$\beta_2\;=\;\begin{tikzpicture}[scale=0.40, baseline=20pt]
\fill[NE-lines] (0,2) rectangle (6,3);
\draw [semithick] (0.001,0.001) grid (5.999,5.999);
\filldraw (1,4) circle (6pt);
\filldraw (2,1) circle (6pt);
\filldraw (3,3) circle (6pt);
\filldraw (4,5) circle (6pt);
\filldraw (5,2) circle (6pt);
\end{tikzpicture}$
\\
$\delta_1\;=\;\begin{tikzpicture}[scale=0.40, baseline=20pt]
\fill[NE-lines] (0,3) rectangle (6,4);
\draw [semithick] (0.001,0.001) grid (5.999,5.999);
\filldraw (1,5) circle (6pt);
\filldraw (2,1) circle (6pt);
\filldraw (3,4) circle (6pt);
\filldraw (4,2) circle (6pt);
\filldraw (5,3) circle (6pt);
\end{tikzpicture}$
\qquad
$\delta_2\;=\;\begin{tikzpicture}[scale=0.40, baseline=20pt]
\fill[NE-lines] (0,3) rectangle (6,4);
\draw [semithick] (0.001,0.001) grid (5.999,5.999);
\filldraw (1,5) circle (6pt);
\filldraw (2,2) circle (6pt);
\filldraw (3,4) circle (6pt);
\filldraw (4,1) circle (6pt);
\filldraw (5,3) circle (6pt);
\end{tikzpicture}$
\end{tabular}
\caption{Mesh (bivincular) patterns such that
$\gamma\bigl((\Modasc(212)\bigr)=\F(\alpha)$,
$\gamma\bigl((\Modasc(2213)\bigr)=\F(\beta_1,\beta_2)$,
and $\gamma\bigl((\Modasc(2321)\bigr)=\F(\delta_1,\delta_2)$.
}\label{figure_patterns}
\end{figure}

Let us start from the pattern $2321$.

\begin{proposition}\label{fish_2321}
We have
$$
\gamma\bigl(\Modasc(2321)\bigr)=\F(\delta_1,\delta_2).
$$
\end{proposition}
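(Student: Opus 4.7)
My plan is to establish both inclusions by transferring pattern occurrences through the Burge transpose $\gamma$. The key structural fact is the block description of the biword $(\sort(x), \gamma(x))$: for each value $u \in [\max x]$, the top-$u$ block has as its bottom entries the positions of $u$ in $x$ listed in \emph{decreasing} order. Consequently each value of $\gamma(x)$ is a position of $x$, the $x$-values $x_V$ read along increasing $\gamma(x)$-positions are weakly increasing, and within a block the positions decrease.

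For the forward direction, I start with an occurrence of $2321$ at $i_1 < i_2 < i_3 < i_4$ with values $a, b, a, c$. Using $\asctops(x) = \nub(x)$ I reduce to $i_3 = i_2 + 1$: since $x_{i_3} = a$ is a non-leftmost copy, $x_{i_3 - 1} \geq a$, and iterating either shifts $i_3$ leftward or replaces $i_2$ by $i_3 - 1$. The four indices $i_1, i_2, i_3, i_4$, viewed as values of $\gamma(x)$, then appear from left to right in the order $i_4, i_3, i_1, i_2$ with relative pattern $4312$, and the bivincular row of $\delta_1, \delta_2$ becomes exactly $i_3 = i_2 + 1$. A fifth entry at a $\gamma(x)$-position strictly between those of $i_4$ and $i_3$ corresponds to a position $v$ of $x$ with either $v < i_4$ and $x_v = c$, or $c < x_v < a$. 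A case analysis on the leftmost copy $\ell_c$ of $c$: if $\ell_c < i_1$ choose $v = \ell_c$ to obtain $\delta_1$; if $i_1 < \ell_c < i_2$ choose $v = \ell_c$ to obtain $\delta_2$; if $\ell_c > i_3$, the ascent-top condition forces $x_{\ell_c - 1} < c$, so $(i_1, i_2, i_3, \ell_c - 1)$ is a new $2321$ occurrence with strictly smaller $c$, and one iterates (terminating at the latest when $c = 1$, where $\ell_1 = 1 < i_1$).

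For the reverse direction, given an occurrence of $\delta_1$ or $\delta_2$ at $\gamma(x)$-positions $P_1 < \cdots < P_5$ with values $V_1, \ldots, V_5$, block-monotonicity yields $x_{V_1} \leq x_{V_2} < x_{V_3} \leq x_{V_4} < x_{V_5}$, and the shading $V_3 = V_5 + 1$ gives a descent of $x$ at $V_5$. If $x_{V_3} = x_{V_4}$, the four positions $V_4 < V_5 < V_3 < V_1$ (for $\delta_1$; or $V_4 < V_2 < V_5 < V_3 < V_1$ for $\delta_2$) carry values forming a direct $2321$ occurrence. Otherwise $x_{V_3} < x_{V_4}$; the descent at $V_5$ forces $V_3 \notin \asctops(x)$, whence $\asctops(x) = \nub(x)$ provides an earlier copy $V' < V_3$ of $x_{V_3}$, and one of $(V', V_4, V_3, V_1)$ or $(V', V_5, V_3, V_1)$, depending on where $V'$ lies, is a $2321$ occurrence.

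The main obstacle is the delicate case analysis in both directions, pivoting on the defining identity $\asctops(x) = \nub(x)$ of $\Modasc$. In the forward direction, the iterative reduction in the $\ell_c > i_3$ case requires verifying at each step that $i_3 = i_2 + 1$ and the $2321$ structure are preserved while $c$ strictly decreases. In the reverse direction, the case $x_{V_3} < x_{V_4}$ requires a short subcase split on the position of the earlier copy $V'$ relative to $V_4$ and $V_5$ (and $V_2$ in the $\delta_2$ case) to identify the correct $2321$ occurrence.
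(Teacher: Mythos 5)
Your proposal is correct and follows essentially the same route as the paper's proof: both directions transfer occurrences through the block structure of the Burge transpose, using $\asctops(x)=\nub(x)$ first to make the two copies of the repeated value adjacent (yielding the bivincular shading), then to supply the fifth entry via the leftmost copy of the smallest value, with an iteration when that leftmost copy lies too far right. The only cosmetic differences are that your reverse direction splits on whether $x_{V_3}=x_{V_4}$ (the paper's uniform leftmost-copy argument covers both subcases), and your termination argument for the forward iteration is made explicit via the strictly decreasing value $c$.
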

\begin{proof}
Let $x\in\Modasc_n(2321)$ and let $p=\gamma(x)$ be the
corresponding Fishburn permutation. We show that
$$
x\text{ contains }2321
\quad\text{if and only if}\quad
p\text{ contains }\delta_1\text{ or }\delta_2.
$$
Suppose that $x$ contains an occurrence $x_{i_1}x_{i_2}x_{i_3}x_{i_4}$
of $2321$. Let $j$ be the index of the leftmost copy of
$x_{i_4}$ in $x$, i.e. such that $(j,x_j)\in\nub(x)$ and $x_j=x_{i_4}$.
We start by showing that we can assume $i_3=i_2+1$ without losing
generality.
Indeed, we have $x_{i_1}=x_{i_3}$ and thus, since $\asctops(x)=\nub(x)$,
$x_{i_3}$ is not an ascent top, i.e. $x_{i_3-1}\ge x_{i_3}$.
Now, if $x_{i_3-1}>x_{i_3}$, then $x_{i_1}x_{i_3-1}x_{i_3}x_{i_4}$
is an occurrence of $2321$ where the second and third element are
in consecutive position, as wanted. Otherwise, if $x_{i_3-1}=x_{i_3}$,
then we can replace $x_{i_3}$ with $x_{i_3-1}$ and repeat the same
argument until we fall back in the previous case.
Similarly, we can assume $j<i_2$. Indeed, we have $x_{j-1}<x_j$
since $(j,x_j)\in\nub(x)=\asctops(x)$.
If $j>i_2$ (and thus also $j>i_3=i_2+1$), then
$x_{i_1}x_{i_2}x_{i_3}x_{j-1}$ is an occurrence of $2321$ and we can
once again go on until we eventually fall in the case $j<i_2$.
In the end, due to the assumptions $i_3=i_2+1$ and $j<i_2$,
we have either
$$
x=\cdots x_{j}\cdots x_{i_1}\cdots x_{i_2}x_{i_3}\cdots x_{i_4}
\quad\text{or}\quad
x=\cdots x_{i_1}\cdots x_{j}\cdots x_{i_2}x_{i_3}\cdots x_{i_4},
$$
depending on whether $j<i_1$ or $j>i_1$. Now, let us consider more
in details the equation $(\identity_n,x)^T=(\sort(x),p)$.
If $j<i_1$, then we have
$$
\setlength\arraycolsep{1pt}
\begin{pmatrix}
\cdots & {j}   &\cdots & {i_1}  &\cdots & {i_2}{i_3}    &\cdots & {i_4} & \cdots\\
\cdots & x_{j} &\cdots & x_{i_1}&\cdots & x_{i_2}x_{i_3}&\cdots & x_{i_4}& \cdots
\end{pmatrix}^T=
\begin{pmatrix}
\cdots & x_{i_4}&\cdots& x_{j}&\cdots& x_{i_3}&\cdots&x_{i_1}&\cdots& x_{i_2}& \cdots\\
\cdots & i_4    &\cdots& j    &\cdots& i_3    &\cdots&i_1    &\cdots& i_2& \cdots
\end{pmatrix}
$$
and $i_4,j,i_3,i_1,i_2$ is an occurrence of $\delta_1$ in $p$. Indeed,
the underlying pattern is $51423$ and $i_3=i_2+1$.
On the other hand, suppose that $j>i_1$. Then
$$
\setlength\arraycolsep{1pt}
\begin{pmatrix}
\cdots & {i_1}   &\cdots & {j}  &\cdots & {i_2}{i_3}    &\cdots & {i_4}& \cdots\\
\cdots & x_{i_1} &\cdots & x_{j}&\cdots & x_{i_2}x_{i_3}&\cdots & x_{i_4}& \cdots
\end{pmatrix}^T=
\begin{pmatrix}
\cdots & x_{i_4}&\cdots& x_{j}&\cdots& x_{i_3}&\cdots&x_{i_1}&\cdots& x_{i_2}& \cdots\\
\cdots & i_4    &\cdots& j    &\cdots& i_3    &\cdots&i_1    &\cdots& i_2& \cdots
\end{pmatrix}.
$$
In this case, the pattern underlying $i_4,j,i_3,i_1,i_2$ is $52413$
and, since $i_3=i_2+1$, $i_4,j,i_3,i_1,i_2$ is an occurrence
of $\delta_2$.
We have thus proved that if $x$ contains $2321$, then $p$ contains
$\delta_1$ or $\delta_2$, as desired.

To prove the opposite direction, we show that, if $p$ contains
$\delta_1$ or $\delta_2$, then $x$ contains $2321$.
Suppose initially that $p$ contains $\delta _1$.
Let $p_{i_1}p_{i_2}p_{i_3}p_{i_4}p_{i_5}$ be an occurrence
of $\delta_1$ in $p$; that is, an occurrence of $51423$ where
$p_{i_3}=p_{i_5}+1$. Let
$$
\setlength\arraycolsep{1.5pt}
\setcounter{MaxMatrixCols}{15}
\binom{\sort(x)}{p}=
\begin{pmatrix}
\cdots & \ell_1 &\cdots& \ell_2&\cdots& \ell_3
&\cdots&\ell_4&\cdots& \ell_5 & \cdots\\
\cdots & p_{i_1}&\cdots& p_{i_2}&\cdots& p_{i_3}
&\cdots& p_{i_4}&\cdots& p_{i_5}&\cdots
\end{pmatrix}.
$$
Since $(\sort(x),p)$ is a Burge word, we have
$\Des\bigl(\sort(x)\bigr)\subseteq\Des(p)$. Hence it must be
$$
\ell_2<\ell_3\;\text{ since }\;p_{i_2}<p_{i_3}
\quand
\ell_4<\ell_5\;\text{ since }\;p_{i_4}<p_{i_5}.
$$
Furthermore, the Burge transpose acts as
\begin{align*}
\setlength\arraycolsep{1.5pt}
\binom{\sort(x)}{p}^T
&=\begin{pmatrix}
\cdots & \ell_1 &\cdots& \ell_2&\cdots& \ell_3
&\cdots&\ell_4&\cdots& \ell_5 & \cdots\\
\cdots & p_{i_1}&\cdots& p_{i_2}&\cdots& p_{i_3}
&\cdots& p_{i_4}&\cdots& p_{i_5}&\cdots
\end{pmatrix}^T\\
&=\begin{pmatrix}
\cdots & p_{i_2}&\cdots& p_{i_4}&\cdots& p_{i_5}\;
p_{i_3}&\cdots& p_{i_1}&\cdots\\
\cdots & \ell_2 &\cdots& \ell_4&\cdots& \ell_5\;
\ell_3&\cdots& \ell_1 & \cdots
\end{pmatrix}
=\binom{\identity_n}{x}.
\end{align*}
Note that $\ell_5>\ell_3>\ell_1$ and $\ell_5$ and $\ell_3$ are in consecutive
positions since $p_{i_3}=p_{i_5}+1$. In particular, such entry $\ell_3$
is not an ascent top in $x$. Thus the leftmost copy, say $x_j$,
of $\ell_3$ precedes $\ell_5$ in $x$ (more precisely, it precedes the
column $(p_{i_5},\ell_5)$). Therefore, the entry $x_j$ form an occurrence
of $2321$ together with the bottom entries $\ell_5$, $\ell_3$ and $\ell_1$
in the columns $(p_{i_5},\ell_5)$, $(p_{i_3},\ell_3)$ and $(p_{i_1},\ell_1)$.
We have thus proved that, if $p$ contains $\delta_1$, then $x$ contains
$2321$.
Finally, suppose that $p$ contains an occurrence
$p_{i_1}p_{i_2}p_{i_3}p_{i_4}p_{i_5}$ of $\delta_2$. The proof that $x$
contains $2321$ is identical to the previous case, the only
difference being that the positions of the columns $(p_{i_2},i_2)$
and $(p_{i_4},i_4)$ is exchanged since the classical pattern underlying 
$\delta_2$ is $52413$ (instead of $51423$).
\end{proof}

The two remaining patterns $212$ and $2213$ can be solved in a similar
fashion. Below we just sketch the corresponding proofs, leaving
the details to the reader.

\begin{proposition}\label{fish_2213}
We have
$$
\gamma\bigl(\Modasc(2213)\bigr)=\F(\beta_1,\beta_2).
$$
\end{proposition}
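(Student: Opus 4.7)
The plan is to mirror the proof of Proposition~\ref{fish_2321}, establishing both inclusions via the Burge transpose.

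For the backward direction, suppose $p$ contains an occurrence of $\beta_1$ or $\beta_2$ at positions $k_1<\cdots<k_5$, and set $v_i=p_{k_i}$ and $\ell_i=\sort(x)_{k_i}$. The Burge word property $\Des(\sort(x))\subseteq\Des(p)$ yields $\ell_2<\ell_3$ in both cases, plus $\ell_4<\ell_5$ for $\beta_1$ or $\ell_3<\ell_4$ for $\beta_2$, while the bivincular shading places $v_3$ immediately to the right of $v_4$ (for $\beta_1$) or $v_5$ (for $\beta_2$). Transposing back, $x$ has entry $\ell_i$ at position $v_i$. For $\beta_1$, either $\ell_3=\ell_4$, in which case the four entries at the sorted $x$-positions $v_4,v_3,v_1,v_5$ directly form a $2213$, or $\ell_3<\ell_4$, so $v_3$ is not an ascent top of $x$ and hence not a leftmost copy of $\ell_3$; its leftmost copy $j$ then produces $2213$ at $(j,v_3,v_1,v_5)$. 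The $\beta_2$ case is analogous: $\ell_3<\ell_4\leq\ell_5$ gives $x_{v_3}<x_{v_5}$ at the adjacent pair, so $v_3$ is not an ascent top, and the same leftmost-copy argument yields $2213$ at $(j,v_3,v_1,v_4)$.

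For the forward direction, fix an occurrence $x_{i_1}x_{i_2}x_{i_3}x_{i_4}$ of $2213$ with values $a,a,b,c$ where $b<a<c$. Since $x_{i_2}$ is not a leftmost copy of $a$, it is not an ascent top, so $x_{i_2-1}\geq a$. Iteratively replacing $i_1$ by $i_2-1$ whenever equality holds reduces to either the \emph{adjacent} case $i_2=i_1+1$ or the \emph{non-adjacent} case $x_{i_2-1}>a$. In parallel, whenever the leftmost copy $j$ of $b$ lies in $(i_2,i_3)$, the relation $(j,b)\in\nub(x)=\asctops(x)$ gives $x_{j-1}<b$, so replacing $i_3$ by $j-1$ strictly decreases $b$; iterating drives $b$ down to~$1$, and since $x_1=1$ always holds in $\Modasc$, we obtain $j=1<i_1$.

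Now form the five columns $(j,b),(i_1,a),(i_2,a),(i_3,b),(i_4,c)$ in the adjacent case, or $(j,b),(i_2-1,x_{i_2-1}),(i_2,a),(i_3,b),(i_4,c)$ in the non-adjacent case, and apply the Burge transpose. A direct computation shows that the adjacent case always yields $\beta_1$, while the non-adjacent case yields $\beta_1$ when $x_{i_2-1}<c$ and $\beta_2$ when $x_{i_2-1}\geq c$; the bivincular constraints are automatic since the two positions of $x$ playing the roles of ``$3$'' and ``$2$'' in the pattern are consecutive integers. The principal obstacle will be the non-adjacent case: the naive four-column configuration built from the $2213$ occurrence alone transposes to the classical pattern $3214$, which is neither $\beta_1$ nor $\beta_2$, so one must drop $(i_1,a)$ and include $(i_2-1,x_{i_2-1})$ instead, after which comparing $x_{i_2-1}$ to $c$ selects $\beta_1$ or $\beta_2$.
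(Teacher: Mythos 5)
Your proof follows essentially the same route as the paper's: both directions are handled by explicitly transposing the relevant five columns, with the same adjacent/non-adjacent split (comparing $x_{i_2-1}$ with $x_{i_4}$ to select $\beta_1$ or $\beta_2$) and the same leftmost-copy completion for the converse. The only blemish is the claim that the leftmost-copy iteration terminates with $j=1$ --- it in fact stops as soon as $j<i_2$, which (together with $x_j<x_{i_1}$ and $x_j<x_{i_2-1}$, forcing $j<i_1$ resp.\ $j<i_2-1$) is all that is needed.
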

\begin{proof}
Let $x\in\Modasc_n(2213)$ and let $p=\gamma(x)$.
We show that
$$
x\text{ contains }2213
\quad\text{if and only if}\quad
p\text{ contains }\beta_1\text{ or }\beta_2.
$$
Let $x_{i_1}x_{i_2}x_{i_3}x_{i_4}$ be an occurrence of $2213$ in $x$.
Let $x_j$ be the leftmost copy of $x_{i_3}$ in $x$.
Note that
$$
x_j=x_{i_3}<x_{i_1}=x_{i_2}<x_{i_4}.
$$
It is not hard to show that one can assume $j<i_2$ without losing
generality.
If $i_2=i_1+1$, then $i_3,j,i_2,i_1,i_4$ is an occurrence of $\beta_1$
in $p$. Otherwise, consider the entry $x_{i_2-1}$ immediately to the left
of $x_{i_2}$. Since $x_{i_2}\notin\nub(x)=\asctops(x)$, it must
be $x_{i_2-1}\ge x_{i_2}$ (and $j\neq i_2-1$). If $x_{i_2-1}=x_{i_2}$, replace $x_{i_2}$
with $x_{i_2-1}$ and repeat the same argument. Otherwise, suppose that
$x_{i_2-1}>x_{i_2}$. If $x_{i_2-1}<x_{i_4}$, then
\begin{align*}
\setlength\arraycolsep{1pt}
\binom{\identity}{x}^T&=
\begin{pmatrix}
\cdots & {j}   &\cdots & {i_2-1}\;{i_2}  &\cdots & {i_3}    &\cdots & {i_4}&\cdots\\
\cdots & x_{j} &\cdots & x_{i_2-1}\;x_{i_2}&\cdots & x_{i_3}&\cdots & x_{i_4}&\cdots
\end{pmatrix}^T\\
&=\begin{pmatrix}
\cdots & x_{i_3}&\cdots& x_{j}&\cdots& x_{i_2}&\cdots&x_{i_2-1}&\cdots& x_{i_4}&\cdots\\
\cdots & i_3    &\cdots& j    &\cdots& i_2    &\cdots&i_2-1    &\cdots& i_4&\cdots
\end{pmatrix}
=\binom{\sort(x)}{p}
\end{align*}
and $i_3,j,i_2,i_2-1,i_4$ is an occurrence of $\beta_1$
(note that $i_3,j,i_2,i_2-1,i_4\simeq 41325$). Similarly, if
$x_{i_2-1}\ge x_{i_4}$, then
\begin{align*}
\setlength\arraycolsep{1pt}
\binom{\identity}{x}^T&=
\begin{pmatrix}
\cdots & {j}   &\cdots & {i_2-1}\;{i_2}  &\cdots & {i_3}    &\cdots & {i_4}&\cdots\\
\cdots & x_{j} &\cdots & x_{i_2-1}\;x_{i_2}&\cdots & x_{i_3}&\cdots & x_{i_4}&\cdots
\end{pmatrix}^T\\
&=\begin{pmatrix}
\cdots & x_{i_3}&\cdots& x_{j}&\cdots& x_{i_2}&\cdots&x_{i_4}&\cdots& x_{i_2-1}&\cdots\\
\cdots & i_3    &\cdots& j    &\cdots& i_2    &\cdots&i_4    &\cdots& i_2-1&\cdots
\end{pmatrix}
=\binom{\sort(x)}{p}.
\end{align*}
and $i_3,j,i_2,i_4,i_2-1$ is an occurrence of $\beta_2$
(here $i_3,j,i_2,i_2-1,i_4\simeq 41352$).

Let us now take care of the other direction. Suppose that $p$ contains 
an occurrence $p_{i_1}p_{i_2}p_{i_3}p_{i_4}p_{i_5}$ of $\beta_1$;
that is, $p_{i_1}p_{i_2}p_{i_3}p_{i_4}p_{i_5}\simeq 41325$ and
$p_{i_3}=p_{i_4}+1$. The Burge transpose acts on
$(\sort(x),p)^T=(\identity,x)$ by mapping the columns
$$
\begin{pmatrix}
\setlength\arraycolsep{1.5pt}
\ell_1  & \ell_2  & \ell_3  & \ell_4  & \ell_5\\
p_{i_1} & p_{i_2} & p_{i_3} & p_{i_4} & p_{i_5}
\end{pmatrix}
\quad\text{to}\quad
\begin{pmatrix}
p_{i_2} & p_{i_4} & p_{i_3} & p_{i_1} & p_{i_5}\\
\ell_2  & \ell_4  & \ell_3  & \ell_1  & \ell_5
\end{pmatrix},
$$
where $\ell_4$ and $\ell_3$ are in consecutive positions in $p$ since
$p_{i_3}=p_{i_4}+1$. Due to the equality
$\Des\bigl(\sort(x)\bigr)\subseteq\Des(p)$, we also have
$$
\ell_2<\ell_3\;\text{ since }\;p_{i_2}<p_{i_3}
\quand
\ell_4<\ell_5\;\text{ since }\;p_{i_4}<p_{i_5}.
$$
Now, if $\ell_4=\ell_3$, then $\ell_4,\ell_3,\ell_1,\ell_5$ is an occurrence
of $2213$ in $x$. Otherwise, let $\ell_4>\ell_3$. Then $\ell_3$ is not an
ascent top in $x$ and $t,\ell_3,\ell_1,\ell_5$ is an occurrence of $2213$,
where $t$ is the leftmost copy of $\ell_3$ in $x$.
In a similar fashion, it is easy to see that, given an occurrence
$p_{i_1}p_{i_2}p_{i_3}p_{i_4}p_{i_5}$ of $\beta_2$,
the Burge transpose maps the entries $p_{i_1},p_{i_3},p_{i_4}$ to
an occurrence of $213$ in $x$ where the entry that plays the role
of $2$ is not an ascent top---and thus not a leftmost copy.
The desired occurrence of $2213$ is obtained immediately by adding
the leftmost copy of the $2$ to these three entries.
\end{proof}

\begin{proposition}\label{fish_212}
We have
$$
\gamma\bigl(\Modasc(212)\bigr)=\F(\alpha).
$$
\end{proposition}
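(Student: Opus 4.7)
The plan is to follow the template of Propositions~\ref{fish_2321} and~\ref{fish_2213}: for $x\in\Modasc_n$ and $p=\gamma(x)$, I will prove that $x$ contains $212$ if and only if $p$ contains $\alpha$, by transporting an occurrence across the Burge transpose $(\identity,x)^T=(\sort(x),p)$. An occurrence of $\alpha$ is a quadruple $p_{a_1}p_{a_2}p_{a_3}p_{a_4}\simeq 2413$ with bivincular constraints $a_3=a_2+1$ and $p_{a_2}=p_{a_4}+1$. Writing $b_k=p_{a_k}$, the columns of $(\sort(x),p)$ at positions $a_1,\dots,a_4$ come from the columns $(b_k,x_{b_k})$ of $(\identity,x)$, so the task reduces to identifying four positions $b_3<b_1<b_4<b_2$ in $x$ with the right value relations.

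For $212\Rightarrow\alpha$, let $x_{i_1}x_{i_2}x_{i_3}$ realise $212$ with $v=x_{i_1}=x_{i_3}>w=x_{i_2}$. Rather than take $b_3=i_1$ and $b_2=i_3$, I would pick the two copies of $v$ in $x$ that bracket $i_2$ most tightly: indices $\tilde{j}_t<i_2<\tilde{j}_{t+1}$ with $x_{\tilde{j}_t}=x_{\tilde{j}_{t+1}}=v$ and no copy of $v$ strictly between them. Since $\tilde{j}_{t+1}$ is not a leftmost copy of $v$, Proposition~\ref{modasc_char} gives $x_{\tilde{j}_{t+1}-1}\ge v$; equality would produce another copy of $v$ between $\tilde{j}_t$ and $\tilde{j}_{t+1}$, so in fact $x_{\tilde{j}_{t+1}-1}>v$. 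Setting $b_1=i_2$, $b_2=\tilde{j}_{t+1}$, $b_3=\tilde{j}_t$, $b_4=\tilde{j}_{t+1}-1$, the positions satisfy $b_3<b_1<b_4<b_2$ with $b_2=b_4+1$, and the consecutiveness of the two copies of $v$ guarantees that the columns $(b_2,v)$ and $(b_3,v)$ land in consecutive positions $a_2,a_3=a_2+1$ of $(\sort(x),p)$. The resulting quadruple is an occurrence of $\alpha$ in $p$.

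For $\alpha\Rightarrow 212$, suppose $p_{a_1}p_{a_2}p_{a_3}p_{a_4}$ is an occurrence of $\alpha$ and set $b_k=p_{a_k}$, so that $b_3<b_1<b_4<b_2$, $b_2=b_4+1$ and $a_3=a_2+1$. Since $(\sort(x),p)$ is a Burge word, $x_{b_1}\le x_{b_2}\le x_{b_3}\le x_{b_4}$, and the tie-breaking rule of the Burge transpose combined with $b_1<b_2$ upgrades the first inequality to strict. The crucial step is to prove $x_{b_2}=x_{b_3}$: otherwise, the adjacency $a_3=a_2+1$ forces column $a_2$ to be the last column in its top-$x_{b_2}$ block, hence $b_2$ is the leftmost copy of $x_{b_2}$ in $x$; by Proposition~\ref{modasc_char} it is then an ascent top, which combined with $b_2=b_4+1$ gives $x_{b_4}<x_{b_2}$, contradicting $x_{b_2}\le x_{b_4}$. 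Therefore $x_{b_3}=x_{b_2}>x_{b_1}$, and $(b_3,b_1,b_2)$ is an occurrence of $212$ in $x$.

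The main obstacle is the forward direction: the naive choice $(b_3,b_2)=(i_1,i_3)$ fails as soon as $x$ contains another copy of $v$ strictly between $i_1$ and $i_3$, because then the adjacency $a_3=a_2+1$ is broken by an intervening column in the Burge transpose. Replacing $(i_1,i_3)$ by the pair $(\tilde{j}_t,\tilde{j}_{t+1})$ of consecutive copies of $v$ straddling $i_2$ is what repairs this, and the defining equality $\asctops(x)=\nub(x)$ then supplies the required value $>v$ at $b_4=\tilde{j}_{t+1}-1$ for free. Apart from this careful choice, the remaining verifications are parallel to those in Propositions~\ref{fish_2321} and~\ref{fish_2213}.
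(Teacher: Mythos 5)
Your proof is correct. The forward direction is essentially the paper's own argument: normalizing the occurrence of $212$ so that the two copies of $v$ are consecutive among the copies of $v$ (the paper phrases this as taking the largest $i_1$ and smallest $i_3$), using $\asctops(x)=\nub(x)$ to get a strictly larger entry immediately before the second copy, and pushing the resulting $2132$-quadruple through the Burge transpose. Where you genuinely diverge is the converse. The paper does not chase columns backwards at all: it invokes Theorem~\ref{transport_theorem_modasc_fish} with the Fishburn basis $B_{2413}=\{2132,3142\}$ to conclude that $p\geq 2413$ forces $x$ to contain $2132$ or $3142$, settles the first case via $\Modasc(212)=\Modasc(2132)$, and handles $3142$ by adjoining a leftmost copy. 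Your version instead reads off the four columns of $(\sort(x),p)$ directly, uses weak monotonicity of $\sort(x)$ together with the descending tie-break to get $x_{b_1}<x_{b_2}\le x_{b_3}\le x_{b_4}$, and then rules out $x_{b_2}<x_{b_3}$ by showing the adjacency $a_3=a_2+1$ would make $b_2$ a leftmost copy, hence an ascent top, contradicting $x_{b_4}\ge x_{b_2}$ at position $b_4=b_2-1$. This is self-contained (no transport theorem, no computation of a Fishburn basis) and makes explicit use of both bivincular constraints of $\alpha$, whereas the paper's route is shorter but leans on external machinery; both are valid, and yours arguably explains better \emph{why} exactly those shadings appear in $\alpha$.
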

\begin{proof}
Let $x\in\Modasc(212)$ and let $p=\gamma(x)$.
Let $x_{i_1}x_{i_2}x_{i_3}$ be an occurrence of $212$ in $x$.
Without losing generality, we can assume that
\begin{itemize}
\item[$(i)$] $x_j\neq x_{i_1}$ for each $i_1<j<i_3$;
\item[$(ii)$] $x_{i_3-1}>x_{i_3}$.
\end{itemize}
Indeed, $(i)$ is simply achieved by taking the largest $i_1$ and
the smallest $i_3$ such that $x_{i_1}x_{i_2}x_{i_3}\simeq 212$.
Furthermore, since $x_{i_1}=x_{i_3}$, we have
$x_{i_3}\notin\nub(x)=\asctops(x)$.
Hence $x_{i_3-1}\ge x_{i_3}$ and, due to $(i)$, $x_{i_3-1}>x_{i_3}$.
Finally, the Burge transpose maps the entries
$x_{i_1}x_{i_2}x_{i_3-1}x_{i_3}$ to an occurrence of $\alpha$ in $p$.
Indeed $x_{i_1}x_{i_2}x_{i_3-1}x_{i_3}\simeq 2132$ is mapped by $T$
to an occurrence of $2413$. The entries playing the role of $4$
and $1$ are in consecutive positions due to $(i)$;
and the $4$ and the $3$ are consecutive in value due to $(ii)$.

Next suppose that $p$ contains $\alpha$. We show that $x$
contains $212$.
The classical pattern underlying $\alpha$ is $2413$.
Due to Theorem~\ref{transport_theorem_modasc_fish},
since the Fishburn basis of $2413$ is
$$
B_{2413}=\lbrace x\in\Cay:\;\gamma(x)=2413\rbrace
=\lbrace 2132,3142\rbrace,
$$
we have
$$
\F(2413)=\gamma\bigl(\Modasc(2132,3142)\bigl).
$$
Hence, since $p$ contains $2413$, $x$ contains $2132$ or $3142$.
If $x$ contains $2132$, then we are done since $\Modasc(212)=\Modasc(2132)$
by Proposition~\ref{212_eq_2132}. On the other hand, if $x$
contains $3142$, then an occurrence of $212$ can be obtained by taking
the leftmost copy of the entry that plays the role of $2$, which must
precede the $1$ due to the shaded regions defining $\alpha$.
\end{proof}

\begin{corollary}
The sets of Fishburn permutations
$$
\F(\alpha),\quad
\F(\beta_1,\beta_2),\quad
$$
are counted by the Bell numbers. Furthermore, the distribution of
the number of occurrences of the pattern
$$
\begin{matrix}
\mathfrak{g}=
&
\begin{tikzpicture}[scale=0.40, baseline=20.5pt]
\fill[NE-lines] (0,1) rectangle (3,2);
\draw [semithick] (0.001,0.001) grid (2.999,2.999);
\filldraw (1,1) circle (6pt);
\filldraw (2,2) circle (6pt);
\end{tikzpicture}
\end{matrix}
$$
is the reverse of the distribution
of blocks on set partitions.
\end{corollary}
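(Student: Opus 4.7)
The plan is to deduce the corollary from the bijection $\gamma\colon\Modasc\to\F$ together with results already established. For the cardinality, Proposition~\ref{fish_212} and Proposition~\ref{fish_2213} give $\F(\alpha)=\gamma\bigl(\Modasc(212)\bigr)$ and $\F(\beta_1,\beta_2)=\gamma\bigl(\Modasc(2213)\bigr)$; since $\gamma$ is a size-preserving bijection, Theorem~\ref{proof_212} and Theorem~\ref{proof_2213_2231} immediately yield that $|\F_n(\alpha)|$ and $|\F_n(\beta_1,\beta_2)|$ both equal the $n$th Bell number. The heart of the matter is the equidistribution claim, and for this I would prove the following key lemma: for every $x\in\Modasc$, the number of occurrences of $\mathfrak{g}$ in $p=\gamma(x)$ equals the number of ascents of $x$, which by Proposition~\ref{modasc_char} equals $\max(x)-1$.

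The lemma should follow from a direct column-by-column inspection of the Burge transpose. Fix $a\in[n-1]$ and look at the two columns $(a,x_a)$ and $(a+1,x_{a+1})$ of $(\identity,x)$. Under $T$ they flip to $(x_a,a)$ and $(x_{a+1},a+1)$ and land in $(\sort(x),p)$ at the unique positions $i,j$ whose bottom entries are $a$ and $a+1$, respectively. The sorting rule---ascending top, ties broken by descending bottom---places $(x_a,a)$ before $(x_{a+1},a+1)$ exactly when $x_a<x_{a+1}$: if $x_a<x_{a+1}$ the top entries settle the order, while if $x_a=x_{a+1}$ the tie-break puts $(x_{a+1},a+1)$ first because $a+1>a$, and if $x_a>x_{a+1}$ the top entries again order them the opposite way. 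Since an occurrence of $\mathfrak{g}$ in $p$ is precisely a pair $i<j$ with $p_j=p_i+1$, this shows that the value $a$ precedes the value $a+1$ in $p$ if and only if $a+1$ is an ascent position of $x$. Summing over $a\in[n-1]$ gives the lemma.

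The main obstacle will be some care around the presence of other columns $(x_i,i)$ with $x_i=x_a$ or $x_i=x_{a+1}$ but $i\notin\{a,a+1\}$: one must check that such intervening columns do not affect the relative order of $(x_a,a)$ and $(x_{a+1},a+1)$ in the sorted biword, which is automatic because the sorting rule determines the order of any two columns by comparing them directly. Once the lemma is in place, the corollary follows: by Theorem~\ref{proof_212} and Theorem~\ref{proof_2213_2231}, for $y\in\{212,2213\}$ the number of $x\in\Modasc_n(y)$ with $\max(x)=m$ is $S(n,n-m+1)$, so the number of $p\in\F_n(\alpha)$ (resp.\ $\F_n(\beta_1,\beta_2)$) with exactly $k$ occurrences of $\mathfrak{g}$ is $S(n,n-k)$; as $k$ ranges over $\{0,1,\dots,n-1\}$, this sequence is the reverse of $\bigl(S(n,1),S(n,2),\dots,S(n,n)\bigr)$, i.e.\ the reverse of the distribution of blocks on set partitions of $[n]$.
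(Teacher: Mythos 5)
Your proposal is correct and takes essentially the same route as the paper: the enumeration is pulled from the identifications $\F(\alpha)=\gamma\bigl(\Modasc(212)\bigr)$ and $\F(\beta_1,\beta_2)=\gamma\bigl(\Modasc(2213)\bigr)$ together with the Bell/Stirling counts for those pattern classes, and the equidistribution reduces to the fact that $\gamma$ sends strict ascents of $x$ to occurrences of $\mathfrak{g}$ in $\gamma(x)$. The paper merely asserts that last fact, whereas your column-by-column analysis of the Burge transpose's tie-breaking rule actually proves the exact correspondence (value $a$ precedes $a+1$ in $p$ iff $x_a<x_{a+1}$), which is a welcome, and correct, filling-in of the omitted detail.
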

\begin{proof}
The first part of the statement follows immediately by
Theorem~\ref{proof_212}, Theorem~\ref{proof_2213_2231},
Proposition~\ref{fish_2213} and Proposition~\ref{fish_212}.
The second part follows from the same results since $\gamma$ maps
each strict ascent in a modified ascent sequence $x$ to an occurrence
of $\mathfrak{g}$ in the corresponding Fishburn permutation $\gamma(x)$.
\end{proof}

The current author~\cite{CeModasc} has recently solved
Conjecture~\ref{conj_ds} for the remaining pattern $2321$.
By Proposition~\ref{fish_2321}, the set $\F(\delta_1,\delta_2)$ is also
counted by the Bell numbers.

\section{Final remarks}\label{sec_final_remarks}

In this paper, we proved that modified ascent sequences
avoiding any of the patterns in $\{212,1212,2132,12132,2213,2231\}$
are counted by the Bell numbers; we also showed that the
distribution of strict ascents (or, equivalently, of the maximum
value) is the reverse of the number of blocks on set partitions.
The pattern $2321$ was solved recently by the current
author~\cite{CeModasc}, thus fully answering Conjecture~\ref{conj_ds}.

Claesson and the current author~\cite{CC2} have recently introduced
Fishburn trees to clarify the bijections relating modified
ascent sequences, Fishburn matrices and unlabeled $(\twoplustwo)$-free posets.
Under these maps, the sets $\Modasc(212)$, $\Modasc(2213)$,
$\Modasc(2231)$ and $\Modasc(2321)$ determine sets of matrices, trees and posets that
are counted by the Bell numbers: can we find independent and interesting
description of these sets, e.g. in terms of properties defined directly
on each of these structures?

Pattern avoiding modified ascent sequences are related to several
other combinatorial structures. One notable instance is given by
the set $\Modasc(2312,3412)$. The set of Fishburn permutations
corresponding to $\Modasc(2312,3412)$ is $\F(3412)$~\cite{CC}.
Furthermore, the pair of statistics right-to-left maxima and
right-to-left minima on $\F(3412)$ seems to have the same
distribution as the pair left-to-right maxima and right-to-left maxima
over the set of $312$-sortable permutations~\cite{CCF}.
The enumeration of all these sets (see also A202062~\cite{Sl}) is
still unknown.

Sequences in context:\\
A000670, A000110, A022493, A137251, A005493, A259691, A202062.

\end{document}